\newtheorem{thm}{Theorem}[section] 
\newtheorem{lem}[thm]{Lemma} 
\newtheorem{lemma}[thm]{Lemma} 
\newtheorem{claim}[thm]{Claim}
\newtheorem{prop}[thm]{Proposition}
\theoremstyle{definition}
\newcommand{\M}{\mathcal{M}}
\newcommand{\Mn}{\M_k}
\newcommand{\Mnb}{\M_k^{block}}
\newcommand{\bbR}{\mathbb{R}}
\newcommand{\mcE}{\mathcal{E}}
\newcommand{\Var}{\mathrm{var}}
\newcommand{\tmix}{t_{{\rm mix}}}
\newcommand{\trel}{t_{{\rm rel}}}
\title{Polynomial Mixing of the Edge-Flip Markov Chain for Unbiased Dyadic Tilings}
\date{}
\author{Sarah Cannon\footnote{College of Computing, Georgia Institute of Technology, Atlanta, GA 30332-0765, {\tt sarah.cannon@gatech.edu}. Supported in part by NSF DGE-1650044 and a grant from the Simons Foundation (\#361047 to Sarah Cannon).}, David Levin\footnote{Department of Mathematics, University of Oregon, Eugene, OR 97403-1222, {\tt dlevin@uoregon.edu}}, Alexandre Stauffer\footnote{Department of Mathematical Sciences, University of Bath, UK, {\tt a.stauffer@bath.ac.uk}. Supported by a Marie Curie Career Integration Grant PCIG13-GA-2013-618588 DSRELIS, and an EPSRC Early Career Fellowship.}}
\begin{document}
\maketitle
\thispagestyle{empty}
\begin{abstract}\normalsize
We give the first polynomial upper bound on the mixing time of the edge-flip Markov chain for unbiased dyadic tilings, resolving an open problem originally posed by Janson, Randall, and Spencer in 2002~\cite{jrs02}. 
A {\it dyadic tiling} of size $n$ is a tiling of the unit square by $n$ non-overlapping dyadic rectangles, each of area $1/n$, 
where a {\it dyadic rectangle} is any rectangle that can be written in the form $[a2^{-s}, (a+1)2^{-s}] \times [b2^{-t}, (b+1)2^{-t}]$ for $a,b,s,t \in \mathbb{Z}_{\geq 0}$. 
The edge-flip Markov chain selects a random edge of the tiling and replaces it with its perpendicular bisector if doing so yields a valid dyadic tiling.
Specifically, we show that the relaxation time of the edge-flip Markov chain for dyadic tilings is at most $O(n^{4.09})$, which implies that the mixing time is at most $O(n^{5.09})$. 
We complement this by showing that the relaxation time is at least $\Omega(n^{1.38})$, improving upon the previously best lower bound of $\Omega(n\log n)$ coming from the diameter of the chain. 
\end{abstract}


\section{Introduction}
We study the {edge-flip Markov chain} for dyadic tilings.  An interval is \emph{dyadic} if it can be written in the form $[a2^{-s}, (a+1)2^{-s}]$ for non-negative integers $a$ and $s$ with $0 \leq a < 2^{s}$. 
A rectangle is dyadic if it is the Cartesian product of two dyadic intervals. A \emph{dyadic tiling of size $n$} is a tiling of the unit square by 
$n$ non-overlapping dyadic rectangles with the same area $1/n$; see Figure~\ref{fig:dyadic_ex}. 
Less formally, work of Lagarias, Spencer, and Vinson \cite{lsv02} showed  that in two dimensions, dyadic tilings are precisely those tilings that can be constructed by bisecting the unit square, either horizontally or vertically; bisecting each half again, either horizontally or vertically; 
and repeatedly bisecting all remaining rectangular regions until there are $n$ total dyadic rectangles, each of equal area. We necessarily assume $n$ is a power of $2$. There is a natural Markov chain which connects the state space of all dyadic tilings of size $n$ by moves we refer to as {\it edge-flips}.

\begin{figure}
\centering
\includegraphics[scale = 0.65]{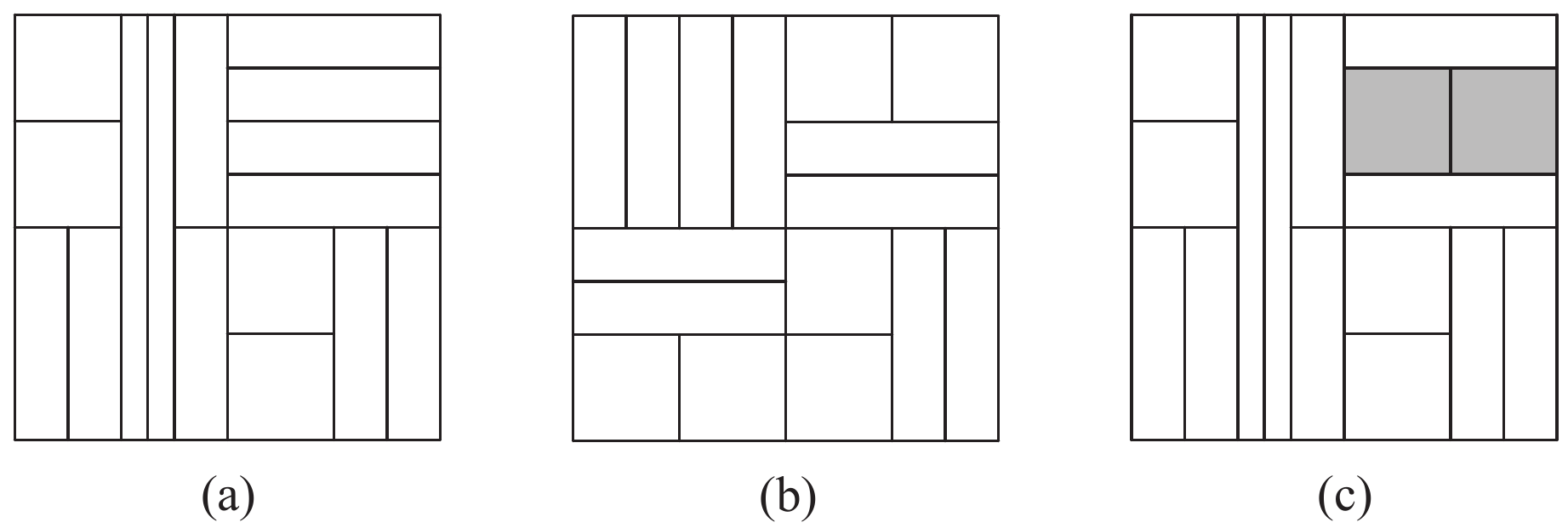} \vspace{-2mm}
\caption{ (a) A dyadic tiling of size 16 with a vertical bisector. (b) A dyadic tiling of size 16 with both a vertical and horizontal bisector. (c) A tiling that is not dyadic; the vertical component of the shaded rectangles is not a dyadic interval.}\label{fig:dyadic_ex}
\end{figure}

We analyze this \emph{edge-flip Markov chain} over the set of dyadic tilings of size $n$. 
Given any dyadic tiling, this chain evolves by selecting an edge of the tiling uniformly at random and replacing it by its perpendicular bisector, if doing so yields a valid dyadic tiling of size $n$;
an illustration is given in Figure~\ref{fig:edgeflip_ex}(a).
Our main result gives the first polynomial upper bound for the mixing time of this Markov chain. 
(The precise definitions of mixing time and relaxation time are deferred to Section~\ref{sec:markovchains}.)
\emph{In this paper, all logarithms have base 2.}
\begin{thm}\label{thm:upperbound}
   The relaxation time of the edge-flip Markov chain for dyadic tilings of size $n$ is at most $O(n^{\log 17})$. As a consequence, the mixing time of this chain is 
   at most $O(n^{1+\log 17})$.
\end{thm}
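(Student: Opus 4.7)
The plan is to establish a recursion of the form $T(n) \leq 17 \cdot T(n/2)$ (up to lower-order factors), which solves to $T(n) = O(n^{\log 17})$. The recursive structure of dyadic tilings strongly suggests a Markov chain decomposition argument exploiting the Lagarias--Spencer--Vinson characterisation: every dyadic tiling of size $n$ contains either the full vertical bisector $\{x=1/2\}$ or the full horizontal bisector $\{y=1/2\}$ of the unit square, so the state space partitions into three blocks $\Omega_V$, $\Omega_H$, $\Omega_{VH}$ according to which of these bisectors are present. Each tiling in $\Omega_V$ (respectively $\Omega_H$) is parametrised by a pair of dyadic tilings of size $n/2$, and each tiling in $\Omega_{VH}$ by a quadruple of dyadic tilings of size $n/4$.

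The first step is to apply a Markov chain decomposition theorem (for instance that of Martin--Randall or of Jerrum--Son--Tetali--Vigoda), which bounds the Poincar\'e constant of the full chain in terms of (i) the minimum Poincar\'e constant of the restriction chains on each block and (ii) the Poincar\'e constant of a projection chain on $\{V,H,VH\}$. Because a tiling in $\Omega_V$ factorises into two independent size-$n/2$ tilings, the restriction chain on $\Omega_V$ essentially tensorises into two independent edge-flip chains on dyadic tilings of size $n/2$, so by tensorisation of variance its relaxation time is at most $T(n/2)$; the same bound holds on $\Omega_H$, and the restriction chain on $\Omega_{VH}$ has relaxation time at most $T(n/4)$.

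The second step is to control the projection chain on $\{V,H,VH\}$: transitions between $\Omega_V$ and $\Omega_{VH}$ correspond to inserting or removing the central horizontal bisector, transitions between $\Omega_H$ and $\Omega_{VH}$ analogously, and direct $V \leftrightarrow H$ transitions do not occur. A direct conductance or canonical-paths argument on this three-state chain should yield relaxation time of constant order in $n$, so the decomposition theorem combines these ingredients into a recursion of the desired shape. The mixing-time bound $\tmix(n) = O(n^{1+\log 17})$ then follows from the standard relation $\tmix \leq \trel \log(1/\pi_{\min})$ together with $\log|\Omega_n| = O(n)$.

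The principal obstacle, in my view, is pinning down the precise constant $17$. A generic application of the decomposition theorems produces a much larger constant, and the sharp exponent $\log 17$ almost certainly demands a bespoke canonical-paths or comparison argument that simultaneously accounts for the projection chain and for the restriction chains, with the constant $17$ emerging from an explicit combinatorial count (plausibly $16+1$, reflecting the four-quadrant structure of $\Omega_{VH}$ tilings and a single additional cross-block contribution). A secondary difficulty is that the two halves of an $\Omega_V$ tiling are coupled through the edges lying on the central vertical bisector, so the restriction chain is not an exact product chain; handling this coupling at only constant overhead—perhaps via a local comparison that absorbs the boundary flips into the projection chain—will likely be required to keep the recursion tight at each scale.
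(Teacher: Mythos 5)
Your proposal is in the same family of ideas as the paper (a recursive, divide-and-conquer argument exploiting the bisector structure), but the specific decomposition you describe has a fatal flaw in Step~2, and it is instructive to see why.

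You propose to apply a Martin--Randall or JSTV-type decomposition to the \emph{edge-flip} chain, with blocks $\Omega_V$, $\Omega_H$, $\Omega_{VH}$ and an induced projection chain on $\{V,H,VH\}$, and you assert that the projection chain has relaxation time of constant order. This is false. In the decomposition framework, the projection chain's transition rate from $V$ to $VH$ is (up to normalization) the total edge-flip probability flux from $\Omega_V$ into $\Omega_{VH}$, i.e.\ it is governed by the fraction of tilings in $\Omega_V$ that are a single edge-flip away from acquiring a horizontal bisector. A full bisector is a length-one line, whereas an edge flip moves a single short edge, so a tiling can be one flip away from gaining a bisector only if its configuration is already extremely constrained. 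Section~\ref{sec:lowerbound} of the paper makes this quantitative: the set $\partial\Omega_k^|$ of tilings adjacent to $\Omega_k^|$ under edge flips has size $\frac{n}{2}\prod_{i=0}^{k-2}|\Omega_i|^2$, which by Lemma~\ref{lem:upsilon} is a fraction at most $\frac{n}{2}\phi^{-2k+2}$ of $|\Omega_k|$, i.e.\ polynomially small. Hence the conductance of your projection chain is polynomially small in $n$, not constant, and your intended recursion $T(n)\le 17\,T(n/2)$ collapses. In fact, this is exactly the mechanism behind the paper's \emph{lower} bound $\trel = \Omega(n^{2\log\phi})$: the bisector indicator is a slowly-mixing statistic under the edge-flip dynamics.

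The paper sidesteps this by comparing the edge-flip chain not to a projection of itself but to a separate \emph{block dynamics} chain $\Mnb$ (Section~\ref{sec:block}) whose moves resample a uniformly random retiling of an entire half of the square. Under such non-local moves, the bisector structure re-randomizes in $O(1)$ steps, which is what makes the recursion tight. Concretely, Theorem~\ref{thm:spectral-bound} shows via a Dirichlet-form comparison that $\gamma_k \ge \gamma_{k,block}\cdot\gamma_{k-1}$, and Theorem~\ref{thm:block17} gives $\gamma_{k,block}\ge 1/17$ via a direct path-coupling argument with a bespoke distance metric $d=b\ell_1+\ell_2$ that weighs half-bisector disagreements against quadrant disagreements; the constant $17$ emerges from the contraction rate of that coupling, not from the count you conjecture. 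There are also secondary inaccuracies in your write-up: $\Omega_V$ as you define it (vertical but no horizontal bisector) is parametrized by pairs $(x_L,x_R)\in\Omega_{k-1}^2$ subject to the non-product constraint that not both halves have horizontal bisectors, so the restriction chain is not a tensor product; and the coupling between halves you worry about is through that global constraint, not through edges lying on the central bisector.
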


In terms of lower bounds, the best previously known lower bound for the mixing time is $\Omega(n\log n)$, which is a simple consequence of the fact that the diameter of the Markov chain is 
of order $n\log n$~\cite{jrs02}. In the theorem below we improve upon this bound, showing that even the relaxation time is much larger than $n\log n$.

\begin{thm}\label{thm:lowerbound}
   The relaxation time of the edge-flip Markov chain for dyadic tilings of size $n$ is at least $\Omega(n^{2\log \phi})$,
   where $\phi=\frac{\sqrt{5}+1}{2}$ is the golden ratio.
\end{thm}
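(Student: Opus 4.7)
The plan is to apply the Poincaré inequality $\trel \geq \Var_\pi(f)/\mcE(f,f)$ with $f = \mathbf{1}_S$, where $S$ is the set of size-$n$ dyadic tilings possessing a vertical fault at $x = 1/2$ (i.e., those in which no rectangle crosses this line). A tiling in $S$ decomposes canonically into two independent size-$n/2$ tilings, one on each half, so $|S| = a_{n/2}^2$, writing $a_n$ for the number of size-$n$ dyadic tilings. Combined with the recursion $a_n = 2 a_{n/2}^2 - a_{n/4}^4$ of Lagarias--Spencer--Vinson \cite{lsv02}, one checks $\pi(S) = 1/(2 - c_{n/2}^{-2}) \to 1/\phi$ is bounded away from $0$ and $1$, so $\Var_\pi(f) = \Theta(1)$; it suffices to bound $\mcE(f,f) = O(n^{-2\log\phi})$.

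The only flips contributing to $\mcE(f,f)$ are those that change the value of $f$; such a flip must occur at an edge on $x = 1/2$ and create a rectangle crossing it, so the two adjacent dyadic rectangles must together form a dyadic rectangle. A short case analysis on dyadic intervals $[1/2 - 2^{-s}, 1/2]$ and $[1/2, 1/2 + 2^{-s'}]$ shows that this is possible only when $s = s' = 1$: both rectangles must have width $1/2$ and, by the area constraint $1/n$, dimensions $1/2 \times 2/n$, and they must be aligned in $y$. Hence the number $b(T)$ of boundary flips out of $T \in S$ counts ``matched horizontal strips'' of height $2/n$ in $T$. Conditioning on $T \in S$ makes the two halves independent uniform size-$n/2$ tilings, giving
\[
\e_S[b(T)] = \tfrac{n}{2} \cdot q_{n/2}^2,
\]
where $q_m$ denotes the probability that a uniform size-$m$ dyadic tiling contains the rectangle $[0,1] \times [j/m,(j+1)/m]$ for a fixed $j \in \{0,\ldots,m-1\}$.

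The golden ratio enters through the asymptotics of $q_m$. A tiling containing such a full-width rectangle cannot have a vertical bisector at $x = 1/2$, so it must bisect horizontally at $y = 1/2$; the half containing the strip, rescaled, is a size-$m/2$ tiling containing the analogous strip, while the other half is arbitrary. Iterating, the number of size-$m$ tilings containing the strip at position $j$ equals $G_m := \prod_{k=0}^{\log m - 1} a_{2^k}$, independent of $j$. Setting $c_m := a_m/a_{m/2}^2$ turns the main recursion into $c_m = 2 - c_{m/2}^{-2}$, whose attracting fixed point is the positive root of $c^3 - 2c^2 + 1 = 0$ distinct from $1$, namely $\phi = (1+\sqrt{5})/2$. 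Since $|g'(\phi)| = 2/\phi^3 < 1/2$, the convergence $c_m \to \phi$ is geometric, and telescoping $\log a_{2^k} = 2^k S_k$ with $S_k := \sum_{j=1}^k 2^{-j}\log c_{2^j}$ yields
\[
\log a_m = m \log\Phi - \log\phi + o(1), \qquad \log G_m = m \log\Phi - (\log m)(\log\phi) + O(1),
\]
for $\Phi := 2^{S_\infty}$, hence $q_m = G_m/a_m = \Theta(m^{-\log\phi})$.

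Combining, $\e_S[b(T)] = \Theta(n^{1 - 2\log\phi})$. Since every dyadic tiling of size $n$ has $\Theta(n)$ edges,
\[
\mcE(f,f) = \frac{1}{a_n}\sum_{T \in S} \frac{b(T)}{e(T)} = O\bigl(\pi(S)\cdot n^{-2\log\phi}\bigr),
\]
and the Poincaré inequality gives $\trel = \Omega(n^{2\log\phi})$. The main technical obstacle is the asymptotic analysis of $q_m$: the exponential growth rates $\Phi^m$ of $a_m$ and $G_m$ must cancel exactly, so that the polynomial factor $m^{-\log\phi}$ emerges entirely from the subleading $\log\phi$ correction in the telescoped sums; this requires quantitative control on the rate $c_m \to \phi$ and careful bookkeeping of lower-order terms.
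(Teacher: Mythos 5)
Your proposal is correct and uses the same test function as the paper: the indicator $f=\mathbf{1}_S$ where $S=\Omega_k^|$ is the set of tilings with a vertical bisector. Your computation of $\pi(S)=1/c_n\to 1/\phi$, the characterization of the boundary flips (both rectangles must be $\tfrac12\times\tfrac2n$, aligned), and the resulting identity $\sum_{T\in S} b(T) = \tfrac{n}{2}\prod_{i=0}^{k-2}A_i^2$ all agree exactly with the paper's count of $|\partial\Omega_k^||$ via the ``pivotal edge'' construction. Where you diverge is in how you estimate the final quantity $\prod_{i=0}^{k-2}A_i^2/A_k$. The paper proves the one-sided bound $\prod_{i=0}^{k-2}|\Omega_i|^2/|\Omega_k|\leq\phi^{-2k+2}$ (Lemma~\ref{lem:upsilon}) by an explicit combinatorial construction: it exhibits a sub-family $\Upsilon_k\subset\Omega_k^+$ with $|\Upsilon_k|=\prod_{i=0}^{k-2}|\Omega_i|^2$ and then bounds $|\Upsilon_k|/|\Omega_k|$ by a telescoping product of ratios $|\Omega_j^+|/|\Omega_j|$ and $|\Omega_j^||/|\Omega_j|$, each controlled by Lemma~\ref{lem:plus}. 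You instead extract two-sided asymptotics $\log a_m=m\log\Phi-\log\phi+o(1)$ and $\log G_m=m\log\Phi-(\log m)\log\phi+O(1)$ by telescoping $\log A_k=2^k\sum_{j\leq k}2^{-j}\log c_{2^j}$ and invoking geometric convergence of $c_m\to\phi$ from $|g'(\phi)|=2/\phi^3<1$. Your route is analytically sharper (it actually gives $q_m=\Theta(m^{-\log\phi})$, not just $O$), at the cost of needing quantitative control of the fixed-point iteration; the paper's route is more elementary and gives a clean non-asymptotic inequality valid for every $k\geq 2$ without any convergence-rate bookkeeping. Both are valid; you only need the one-sided bound for the theorem, so the extra precision in your approach is not used. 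One small inaccuracy to flag: you write $\mcE(f,f)=\tfrac{1}{a_n}\sum_{T\in S}b(T)/e(T)$, but in the chain as defined each flip has probability exactly $1/(2n)$, not $1/e(T)$; since $e(T)=\Theta(n)$ this does not affect the order of magnitude, but the exact Dirichlet form is $\mcE(f)=\tfrac{1}{2n|\Omega_k|}\sum_{T\in S}b(T)$.
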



\vspace{1mm}
\noindent
{\bf Related work.}
The edge-flip Markov chain for dyadic tilings was first considered by Janson, Randall, and Spencer in 2002~\cite{jrs02}. 
They showed that this Markov chain is irreducible, but left as an open problem to derive that the mixing time is polynomial in $n$. 
Instead, they presented another Markov chain, which has additional global moves consisting of rotations at all scales, 
and showed that this chain mixes in polynomial time. 
However, applications of the comparison technique of Diaconis and Saloff-Coste~\cite{dsc} have failed to extend this polynomial mixing bound to the 
more natural edge-flip Markov chain (which, in fact, corresponds to only performing rotations at the smallest scale).

Cannon, Miracle, and Randall considered the mixing time of the edge-flip Markov chain for a weighted version of dyadic tilings~\cite{cmr15}. In this version, given a parameter $\lambda>0$, 
the stationary probability of a dyadic tiling $x$ is proportional to $\lambda^{|x|}$, where $|x|$ is the sum of the length of the edges of $x$.
The Metropolis rule \cite{met} is incorporated into the edge-flip Markov chain so that the chain has the desired stationary distribution. 
They showed the mixing time of this chain is at least exponential in $n^2$ for any $\lambda>1$, and at most $O(n^2\log n)$ for any $\lambda<1$. 
This establishes a phase transition at critical point $\lambda = 1$, which corresponds to the unweighted case considered here. However, their techniques did not extend to the critical point, and they left as an open problem bounding the mixing time when $\lambda=1$.
Our main result, Theorem~\ref{thm:upperbound},  uses a different, non-local approach to
finally answer the question of~\cite{jrs02} and~\cite{cmr15} by showing the mixing time of the edge-flip Markov chain at critical point $\lambda = 1$ is at most polynomial in $n$, substantially less than the mixing time when $\lambda > 1$.
Furthermore, our Theorem
~\ref{thm:lowerbound} combined with the result for the weighted case in~\cite{cmr15} shows that the behavior at the (unweighted) critical point $\lambda=1$ 
is also substantially different than when $\lambda<1$. 
While it follows from the path coupling analysis in~\cite{cmr15} that the relaxation time is $O(n)$ for all fixed $\lambda<1$, 
 Theorem~\ref{thm:lowerbound} establishes a super-linear lower bound on the relaxation
time when~$\lambda=1$.

Variants of the edge-flip Markov chain offer a natural way to sample from many structures, 
but establishing rigorous polynomial upper bounds on the mixing time has often proven difficult, even in simple cases.
Perhaps the most studied case is that of triangulations of a given point set, 
as efficiently generating uniformly random triangulations of general planar point sets has been a problem of great interest in computer graphics and computational geometry. 
However, the mixing time of the edge-flip Markov chain for triangulations remains open in the general case, and no polynomial upper bound is known. 
The only known exception is for $n$ points in \emph{convex position}, which corresponds to triangulations of a convex polygon. 
In this case, the edge-flip Markov chain is known to mix in at most $O(n^5)$ steps~\cite{mt}, but the correct order of the mixing time is still unknown. 
For the case of lattice triangulations, which are triangulations of an $m \times n$ grid of points, no polynomial 
upper bound on the mixing time is known even when $m\geq2$ is kept fixed as $n\to\infty$.
The only known results in this case are limited to the weighted case~\cite{cmss15,cmss16,stauffer16}.
%
Another example of a related Markov chain that uses natural edge-flip type moves is the {\it switch Markov chain} for sampling from graphs with a given degree sequence. In this chain, at each iteration two random non-adjacent edges are removed and their four endpoints are randomly rematched; the move is rejected if it results in a multiple edge. Again, in the general case the mixing time of this Markov chain is unknown, though polynomial upper bounds exist when certain restrictions are placed on the degree sequence \cite{cdg07, greenhill15}. 

For the case of rectangular tilings, results for the mixing time of the edge-flip Markov chain have been quite rare.
One important result was obtained for domino tilings, which are tilings of an $n\times n$ square by rectangles of dimensions $1 \times 2$ or $2 \times 1$.
In this case, the edge-flip Markov chain is known to mix in time polynomial in the number of dominoes, a result that heavily relies on the connection between domino tilings and 
random lattice paths~\cite{lrs, rt}. 

The case of dyadic tilings exhibits interesting asymptotic properties that have been studied by combinatorialists~\cite{lsv02,jrs02}.
Tilings in which all rectangles are dyadic, but may have different areas, 
have been used as a basis for subdivision algorithms to solve problems such as approximating singular algebraic curves \cite{bcgy12} and classifying data using decision trees \cite{ScottNowak06}. 
In both of these examples, the unit square is repeatedly subdivided into smaller and smaller dyadic rectangles until the desired approximation or classification is achieved, 
with more subdivisions in the areas of the most interest (e.g., near the algebraic curve or where data classified differently is close together).

\vspace{1mm}
{\bf Proof ideas.}
We identify a certain block structure on dyadic tilings that allows us to relate the spectral gap of the edge-flip Markov chain to that of another, simpler Markov chain.
In the simpler Markov chain, which we refer to as the block dynamics, for each transition a large region of the tiling is selected and retiled uniformly at random, if possible.  
At the smallest scale, $n=4$, these correspond to exactly the moves of the (lazy) edge-flip Markov chain. 
The structure of these block moves allows us to set up a recursion that relates the spectral gap of the edge-flip Markov chain for tilings of size $n$ with that 
of sizes smaller than $n$ and that of the block dynamics.
This produces an inverse polynomial lower bound on the spectral gap of the edge-flip Markov chain. 

Specifically, 
we adapt a bisection approach inspired by spin system analysis~\cite{Martinelli1999,Cesi2001}. 
We bound the spectral gap $\gamma_k$ of the Markov chain $\Mn$ for dyadic tilings of size $n = 2^k$ by the product of the spectral gap 
$\gamma_{block}$ of the block dynamics Markov chain and the spectral gap $\gamma_{k-1}$ of $\M_{k-1}$, 
and then use recursion to obtain $\gamma_k \geq (\gamma_{block})^k = (\gamma_{block})^{\log n}$. 
As $\gamma_{block}$ is constant, this implies a polynomial relaxation time and thus a polynomial mixing time. 



To establish the explicit upper bound in Theorem~\ref{thm:upperbound}, we use a coupling argument to bound~$\gamma_{block}$; see, e.g., Chapter 13 of~\cite{lpw}.  
The distance metric we use is a carefully weighted average of two different notions of distance between tilings. 
We do a case analysis and show this distance metric always contracts by a factor of at least $1-1/17$ in each step, 
implying the spectral gap $\gamma_{block}$ is at least~$1/17$. 

We use a distinguishing statistic to show the mixing time and relaxation time of the edge-flip Markov chain for dyadic tilings are at least $\Omega(n^{1.38})$; 
again, see Chapter 13 of~\cite{lpw}. 
That is, we define a specific function $f$ on the state space of all dyadic tilings of size $n = 2^k$. 
By considering the variance and Dirichlet form of $f$, and using combinatorial properties of dyadic tilings, 
we can give an upper bound on the spectral gap and thus a lower bound on the relaxation and mixing times.

\section{Background}\label{sec:background}

Here we present some necessary information on dyadic tilings, including their asymptotic behavior, and on Markov chains, including mixing time and local variance. 

\subsection{Dyadic Tilings}

A {\it dyadic interval} is an interval that can be written in the form $[a2^{-s}, (a+1)2^{-s}]$ for non-negative integers $a$ and $s$ with $0 \leq a < 2^{s}$. A {\it dyadic rectangle} is the product of two dyadic intervals. 
A {\it dyadic tiling of size $n = 2^k$} is a tiling of the unit square by $n$  dyadic rectangles of equal area $1/n = 2^{-k}$ that do not overlap except on their boundaries; see Figure~\ref{fig:dyadic_ex}. Let $\Omega_k$ be the set of all dyadic tilings of size $n = 2^k$. 


We say a dyadic tiling has a {\it vertical bisector} if the line $x = 1/2$ does not intersect the interior of any dyadic rectangle in the tiling. We say it has a {\it horizontal bisector} if the same is true of the line $y = 1/2$. It is easy to prove that every dyadic tiling of size $n > 1$  has a horizontal bisector or a vertical bisector. 

The asymptotics of dyadic tilings were first explored by Lagarias, Spencer, and Vinson \cite{lsv02}, and we present a summary of their results.  
Let $A_k=|\Omega_k|$ denote the number of dyadic tilings of size $n = 2^k$. The unit square is the unique dyadic tiling consisting of one dyadic rectangle, so $A_0 = 1$.  There are two dyadic tilings of size $2$, since the unit square may be divided by either a horizontal or vertical bisector,  so $A_1 = 2$. One can also observe that $A_2 = 7$, $A_3 = 82$, $A_4 = 11047$, ... . In fact, the values $A_k$ can be shown to satisfy the recurrence $A_k = 2 A_{k-1}^2 - A_{k-2}^4$; we include a proof of this fact as presented in \cite{jrs02}, because we will use these ideas later. 

\begin{prop}[\cite{lsv02}] \label{thm:recurrence}
For $k \geq 2$, the number of dyadic tilings of size $2^k$ is $A_k = 2 A_{k-1}^2 - A_{k-2}^4$. 
\end{prop}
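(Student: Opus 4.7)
The plan is to prove the recurrence by a straightforward inclusion-exclusion over the two possible bisectors of a dyadic tiling, leveraging the already-stated fact that every dyadic tiling of size $n>1$ admits either a horizontal or a vertical bisector.

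First I would let $V_k \subseteq \Omega_k$ denote the set of tilings with a vertical bisector and $H_k \subseteq \Omega_k$ those with a horizontal bisector. By the stated fact, $\Omega_k = V_k \cup H_k$, so
\begin{equation*}
A_k \;=\; |V_k| + |H_k| - |V_k \cap H_k|.
\end{equation*}
It then suffices to compute each of these three quantities.

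Next I would count $|V_k|$ by exhibiting a bijection between $V_k$ and $\Omega_{k-1} \times \Omega_{k-1}$. If a tiling has a vertical bisector, each of its tiles lies entirely in either $[0,1/2]\times[0,1]$ or $[1/2,1]\times[0,1]$, and each half contains exactly $2^{k-1}$ tiles of area $2^{-k}$. Applying the affine map $(x,y)\mapsto(2x,y)$ to the left half (and $(x,y)\mapsto(2x-1,y)$ to the right half) sends each tile $[a2^{-s},(a+1)2^{-s}]\times[b2^{-t},(b+1)2^{-t}]$ (which must have $s\geq 1$ on the left) to $[a'2^{-(s-1)},(a'+1)2^{-(s-1)}]\times[b2^{-t},(b+1)2^{-t}]$, which is again dyadic and now has area $2^{-(k-1)}$. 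This is easily inverted, giving $|V_k| = A_{k-1}^2$, and by symmetry $|H_k| = A_{k-1}^2$. A completely analogous argument, now rescaling each of the four quadrants independently, shows $|V_k \cap H_k| = A_{k-2}^4$, which requires $k \geq 2$.

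Combining these gives $A_k = 2A_{k-1}^2 - A_{k-2}^4$, as desired. There is no real obstacle here; the only point requiring a small check is that the rescaling preserves the dyadic structure (i.e., dyadic rectangles map to dyadic rectangles and vice versa), which is immediate from the definition once one observes that tiles contained in a half of side $1/2$ necessarily involve a shift parameter $s\geq 1$ in the relevant coordinate.
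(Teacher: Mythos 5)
Your proof is correct and follows essentially the same approach as the paper: inclusion--exclusion over the sets of tilings with a vertical or horizontal bisector, using the affine rescaling maps $x \mapsto 2x$ (and its translates) to biject each half with $\Omega_{k-1}$ and each quadrant with $\Omega_{k-2}$. The only minor difference is that you spell out the check that the shift parameter satisfies $s \geq 1$ on the restricted half, which the paper leaves implicit.
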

\begin{proof}
A dyadic tiling of size $ 2^k$ has a horizontal bisector, a vertical bisector, or both. If it has a vertical bisector, the number of ways to tile the left half of the unit square is $A_{k-1}$; by mapping $x \rightarrow 2x$, we can see that the left half of a dyadic tiling of size $2^k$ is equivalent to a dyadic tiling of the unit square of size $2^{k-1}$ because dyadic rectangles scaled by factors of two remain dyadic. Similarly, mapping $x \rightarrow 2x-1$, the right half of a dyadic tiling of size $2^k$ is equivalent to a dyadic tiling of size $2^{k-1}$. We conclude the number of dyadic tilings of size $2^k$ with a vertical bisector is $A_{k-1}^2$. Similarly, by appealing to  the maps $y \rightarrow 2y$ and $y\rightarrow 2y-1$, we conclude the number of dyadic tilings of size $2^k$ with a vertical bisector is $A_{k-1}^2$.  The number of dyadic tilings of size $2^k$ with both a horizontal and a vertical bisector is $A_{k-2}^4$, as each quadrant of any such tiling is equivalent to a dyadic tiling of the unit square of size $2^{k-2}$. This follows from appealing to the map $(x,y) \rightarrow (2x,2y)$ for the lower left quadrant, and appropriate translations of this for the other three quadrants. 
Altogether, we see $A_k = A_{k-1}^2 + A_{k-1}^2 -  A_{k-2}^4 = 2 A_{k-1}^2 - A_{k-2}^4$. 
\end{proof}

It is believed this recurrence does not have a closed form solution. We note that, as proved in~\cite{lsv02}, $A_k \sim \phi^{-1} \omega^{2^k} = \phi^{-1} \omega^{n}$, where $\phi = (1+\sqrt{5})/2$ is the golden ratio and $\omega = 1.84454757...$; an exact value for $\omega$ is not known.

We now define a recurrence for another useful statistic. We say that a dyadic tiling has a {\it left half-bisector} if the straight line segment from $(0,1/2)$ to $(1/2, 1/2)$  doesn't intersect the interior of any dyadic rectangles. Figure~\ref{fig:dyadic_ex}(a) does not have a left half-bisector, while Figure~\ref{fig:dyadic_ex}(b) does. We are interested in the number of ways to tile the left half of a vertically-bisected dyadic tiling of size $2^k$ such that it has a left half-bisector. 
Appealing to the dilation maps defined in the proof of Proposition~\ref{thm:recurrence}, this number is $A_{k-2}^2$. Among all possible ways to tile the left half of a vertically-bisected tiling $\sigma \in \Omega_k$, we define $f_k$ to be the fraction with a left half-bisector. We see 
  \begin{align*}
  f_k = \frac{A_{k-2}^2}{A_{k-1}}.
  \end{align*}
  We can similarly define {\it right half-bisectors}, {\it top half-bisectors}, and {\it bottom half-bisectors} by considering the straight line segments between $(1/2,1/2)$ and, respectively, $(1,1/2)$, $(1/2,1)$, and $(1/2,0)$. 
  Then $f_k$ is also the fraction of tilings of the right half of vertically-bisected tiling $\sigma$ with a right half-bisector, or the fraction of tilings of the top or bottom halves of a horizontally-bisected tiling $\sigma$ with a top or bottom half-bisector, respectively. 
  Note $f_2 = 0.5$, $f_3 = 4/7 \sim 0.571$, and $f_4 = 49/82 \sim 0.598$. We now examine the asymptotic behavior of~$f_k$. 
 
 \begin{lem} \label{lem:fk_recurrence}
 For all $k\geq3$, $f_k = \frac{1}{2-f_{k-1}^2}$.
 \end{lem}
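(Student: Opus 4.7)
The plan is to prove Lemma~\ref{lem:fk_recurrence} by pure algebraic manipulation of the closed-form expression $f_k = A_{k-2}^2/A_{k-1}$, using the recurrence for $A_k$ established in Proposition~\ref{thm:recurrence}. Since the hypothesis $k \geq 3$ ensures $k-1 \geq 2$, we are allowed to apply Proposition~\ref{thm:recurrence} at the shifted index $k-1$, yielding $A_{k-1} = 2A_{k-2}^2 - A_{k-3}^4$.

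First, I would divide both sides of this shifted recurrence by $A_{k-2}^2$ to obtain
\[
\frac{A_{k-1}}{A_{k-2}^2} \;=\; 2 \;-\; \frac{A_{k-3}^4}{A_{k-2}^2} \;=\; 2 \;-\; \left(\frac{A_{k-3}^2}{A_{k-2}}\right)^{\!2}.
\]
Next I would recognize the parenthesized expression as exactly $f_{k-1}$ by its definition, so the right-hand side equals $2 - f_{k-1}^2$. Taking reciprocals and using $f_k = A_{k-2}^2/A_{k-1}$ gives $f_k = 1/(2 - f_{k-1}^2)$, which is the claim.

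The only thing to verify for cleanliness is that the denominators we invert are nonzero: $A_{k-2} \geq 1$ for all $k \geq 2$ since there is always at least one dyadic tiling, and $2 - f_{k-1}^2 = A_{k-1}/A_{k-2}^2 > 0$ for the same reason. There is no real obstacle here; the lemma is essentially an algebraic reformulation of Proposition~\ref{thm:recurrence} in terms of the normalized quantity $f_k$, and its purpose presumably is to make it convenient to analyze the limiting behavior of $f_k$ (which one would expect to satisfy $f_\infty = 1/(2 - f_\infty^2)$, giving $f_\infty = 1/\phi$ when combined with the golden-ratio identity $\phi^2 = \phi + 1$).
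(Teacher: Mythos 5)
Your proof is correct and takes essentially the same approach as the paper's: both apply Proposition~\ref{thm:recurrence} at index $k-1$ and then algebraically rewrite $f_k = A_{k-2}^2/A_{k-1}$ so that $f_{k-1}^2 = A_{k-3}^4/A_{k-2}^2$ appears in the denominator. The added remark about nonvanishing denominators is a harmless bit of extra rigor.
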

 \begin{proof}
This follows from the recurrence for $A_k$ given in Proposition~\ref{thm:recurrence}:
\begin{align*}
f_k = \frac{A_{k-2}^2}{A_{k-1}} = \frac{A_{k-2}^2}{2A_{k-2}^2 - A_{k-3}^4} = \frac{1}{2-\frac{A_{k-3}^4}{A_{k-2}^2} } = \frac{1}{2-f_{k-1}^2}. 
\end{align*}

\vspace{-10mm}
 \end{proof}
 
 \noindent We can use this recurrence to study the asymptotic behavior of the sequence $\{f_k\}_{k=2}^{\infty}$. 
 
  \begin{lem}\label{lem:fk_limit} The sequence $\{f_k\}_{k=2}^{\infty}$ is strictly increasing and bounded above by $(\sqrt{5}-1)/2$.
 Furthermore, $\lim_{k\rightarrow \infty} f_k = (\sqrt{5}-1)/2$.  
 \end{lem}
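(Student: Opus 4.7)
My plan is to analyze the recurrence $f_k = 1/(2-f_{k-1}^2)$ from Lemma~\ref{lem:fk_recurrence} by viewing it as iteration of the function $g(x) = 1/(2-x^2)$, and to identify $L := (\sqrt{5}-1)/2$ as the relevant fixed point of $g$.

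First I would verify that $L$ is a fixed point. Since $L = 1/\phi$ and $L^2 = (3-\sqrt{5})/2$, a direct computation gives $2 - L^2 = (1+\sqrt{5})/2 = \phi$, so $g(L) = 1/\phi = L$. Next I would observe that $g$ is smooth and strictly increasing on $[0,1]$ because $g'(x) = 2x/(2-x^2)^2 > 0$ there. The base case $f_2 = 1/2 < L \approx 0.618$ then lets me run an induction: if $0 < f_{k-1} < L$, then by monotonicity of $g$ we have $f_k = g(f_{k-1}) < g(L) = L$, so the upper bound $f_k < L$ propagates to all $k \geq 2$.

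To get strict monotonicity I would show $g(x) > x$ for $x \in (0, L)$. Rearranging, this is equivalent to $x^3 - 2x + 1 > 0$, and I would factor the cubic as
\[
x^3 - 2x + 1 = (x-1)\bigl(x^2 + x - 1\bigr) = (x-1)(x - L)(x + \phi).
\]
For $x \in (0, L)$ the three factors have signs $(-)(-)(+)$, so the product is positive, proving $f_k = g(f_{k-1}) > f_{k-1}$. Combined with the previous paragraph, $\{f_k\}$ is strictly increasing and bounded above by $L$, so it converges to some $L^* \leq L$. Passing to the limit in $f_k = 1/(2-f_{k-1}^2)$ gives $(L^*)^3 - 2L^* + 1 = 0$, whose roots are $1$, $L$, and $-\phi$; the constraint $f_2 \leq L^* \leq L$ forces $L^* = L$.

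The argument is essentially routine monotone-convergence reasoning, so the only mildly delicate step is the cubic inequality $g(x) > x$ on $(0,L)$; I expect the explicit factorization $(x-1)(x-L)(x+\phi)$ makes even that transparent, so I do not anticipate a significant obstacle.
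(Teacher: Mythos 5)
Your proof is correct and follows essentially the same route as the paper: verify $L=(\sqrt{5}-1)/2$ is the relevant fixed point of $g(x)=1/(2-x^2)$, propagate the upper bound by induction from $f_2=1/2$, establish strict monotonicity via the sign of $x^3-2x+1$ on the relevant interval (the paper also factors this cubic, identifying the same three roots), and then deduce the limit from monotone convergence plus the fixed-point equation. The only cosmetic difference is that you invoke monotonicity of $g$ (via $g'>0$) to carry the bound $f_k<L$ forward, whereas the paper does the same step by a one-line direct computation; both are fine.
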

 \begin{proof}
 Note $f_2 = 0.5 < (\sqrt{5}-1)/2$. Suppose by induction that $f_{k-1} < \frac{\sqrt{5}-1}{2}$. Then 
 \begin{align*}
f_{k} = \frac{1}{2-f_{k-1}^2} < \frac{1}{2-\left(\frac{\sqrt{5}-1}{2}\right)^2} = \frac{4}{8-(6-2\sqrt{5})} = \frac{4}{2+2\sqrt{5}} = \frac{2}{1+\sqrt{5}} = \frac{\sqrt{5}-1}{2}.\end{align*}

 To show $f_{k} < f_{k+1}$ for all $k \geq 3$, it suffices to show $x < 1/(2-x^2)$ for all $x \in\left[0.5, (\sqrt{5}-1)/2\right)$. This is equivalent to showing the polynomial $x^3-2x+1$ is positive in that range. Factoring shows this polynomial has roots at $1$, $(\sqrt{5}-1)/2$, and $-(\sqrt{5}+1)/2$, and is positive in the range $\left( -(\sqrt{5}+1)/2,(\sqrt{5}-1)/2\right)$. This implies $f_{k} < f_{k+1}$, so the sequence is strictly increasing. 

The sequence $\{f_k\}_{k=2}^\infty$ is bounded and monotone, so it must converge to some limit $\beta$. To find $\beta$, we consider the function $g(x) = 1/(2-x^2)$, which is the recurrence for the $f_k$. This function is continuous away from $\sqrt{2}$ and $-\sqrt{2}$, and thus certainly is continuous on $\left[0.5, (\sqrt{5}-1)/2\right],$ the range of possible values for the $f_k$ and their limit $\beta$. This continuity implies $$g(\beta) = g\left(\lim_{k\rightarrow\infty} f_k\right) = \lim_{k\rightarrow\infty} g(f_k) = \lim_{k\rightarrow\infty} f_{k+1} = \beta.$$  
Thus the limit $\beta$ is necessarily a fixed point of $g(x)$. The fixed points of $g(x)$ are exactly  the three roots of $x^3-2x+1$ found above, and the only one in $\left[0.5, (\sqrt{5}-1)/2\right]$ is $(\sqrt{5}-1)/2$. We conclude $\lim_{k\rightarrow\infty} f_k =  (\sqrt{5}-1)/2$, as desired. 
 \end{proof}

\subsection{Markov Chains}\label{sec:markovchains}

We will consider only discrete time Markov chains in this paper, though identical results also hold for the analogous continuous time Markov chains.
 Any finite ergodic Markov chain is known to converge to a unique stationary distribution $\pi$. The time a Markov chain with transition matrix $P$ takes to converge to its stationary distribution is measured by the {\it total variation distance}, which captures how far the distribution after $t$ steps is from the stationary distribution given a worst case starting configuration: 
 \begin{align*}
 \| P^t - \pi\|_{{\rm TV}} = \max_{x\in\Omega} \frac{1}{2} \sum_{y\in \Omega} |P^t(x,y)-\pi(y)|.
 \end{align*}
 The mixing time of a Markov chain $\M$ is defined to be 
 \begin{align*} 
\tmix(\varepsilon) = \min \{t : \|P^{t'} - \pi\|_{{\rm TV}} \leq \varepsilon\ \  \forall \  t' \geq t\}.
\end{align*}
For convenience, as is standard we define $\tmix=\tmix(1/4)$.

We will bound the mixing time of the edge-flip Markov chain for dyadic tilings by studying its relaxation time and spectral gap. The {\it spectral gap} $\gamma$ of a Markov chain $\M$ with transition matrix $P$ is $1-\lambda_2$, where $\lambda_2$ is the second largest eigenvalue of $P$. 
For a lazy Markov chain $\M$, the relaxation time, denoted by $\trel$, is then the inverse of this spectral gap; we will see in the next section that the edge-flip Markov chain for dyadic tilings, as we've defined it, is lazy. 
The following well-known proposition relates the relaxation time and mixing time for  Markov chains; for a proof, see, e.g., \cite[Theorem 12.3 and Theorem 12.4]{lpw}. 

\begin{prop}\label{prop:rel-mix}
Let $\M$ be an ergodic Markov chain on state space $\Omega$ with reversible transition matrix $P$ and stationary distribution $\pi$. Let $\pi_{min}  = \min_{x \in\Omega} \pi(x)$. Then: 
\begin{align*}
(\trel-1)  \log\left(\frac{1}{2\varepsilon}\right)   \leq \tmix(\varepsilon) \leq \log \left(\frac{1}{\varepsilon \pi_{min}}\right) \trel.
\end{align*}

\end{prop}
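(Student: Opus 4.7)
The plan is to exploit the spectral decomposition of $P$ made available by reversibility. Since $P$ is self-adjoint with respect to the inner product $\langle g, h\rangle_\pi = \sum_x \pi(x) g(x) h(x)$, there is a real orthonormal eigenbasis $f_1 \equiv 1, f_2, \ldots, f_{|\Omega|}$ with eigenvalues $1 = \lambda_1 > \lambda_2 \geq \cdots \geq \lambda_{|\Omega|} \geq 0$ (non-negative by laziness), yielding the identity $P^t(x,y)/\pi(y) = \sum_i \lambda_i^t f_i(x) f_i(y)$. Both inequalities will follow by reading off what this expansion says about $P^t(x,\cdot) - \pi$.

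For the upper bound, I would first use Cauchy--Schwarz to dominate the total variation distance by the $L^2(\pi)$ distance,
\[ 4\|P^t(x,\cdot) - \pi\|_{{\rm TV}}^2 \;\leq\; \sum_y \pi(y)\Bigl(\tfrac{P^t(x,y)}{\pi(y)} - 1\Bigr)^{\!2} \;=\; \sum_{i \geq 2} \lambda_i^{2t} f_i(x)^2. \]
Parseval's identity applied to the point mass at $x$ gives $\sum_i f_i(x)^2 = 1/\pi(x)$, so the right-hand side is at most $\lambda_2^{2t}/\pi_{min}$. Combining with $\lambda_2 = 1-\gamma \leq e^{-\gamma}$ yields $\|P^t(x,\cdot) - \pi\|_{{\rm TV}} \leq e^{-\gamma t}/(2\sqrt{\pi_{min}})$. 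Requiring this to be at most $\varepsilon$ and solving gives $\tmix(\varepsilon) \leq \trel \log(1/(2\varepsilon\sqrt{\pi_{min}}))$, which is at most $\trel \log(1/(\varepsilon\pi_{min}))$ since $\sqrt{\pi_{min}} \leq 2$.

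For the lower bound, I would use an eigenfunction of $\lambda_2$ as a distinguishing statistic. Take $f = f_2$ normalized so $\|f\|_\infty = 1$, and let $x$ attain $|f(x)| = 1$. Since $f$ is orthogonal to $f_1$ we have $\e_\pi f = 0$, and $P^t f = \lambda_2^t f$ gives $|\e_x f(X_t)| = \lambda_2^t$. The dual description of total variation then yields
\[ \lambda_2^t \;=\; |\e_x f(X_t) - \e_\pi f| \;\leq\; (\max f - \min f) \,\|P^t(x,\cdot) - \pi\|_{{\rm TV}} \;\leq\; 2\,\|P^t(x,\cdot) - \pi\|_{{\rm TV}}. \]
Setting $t = \tmix(\varepsilon)$ forces $\lambda_2^t \leq 2\varepsilon$, so $t \geq \log(1/(2\varepsilon))/\log(1/\lambda_2)$; the elementary inequality $-\log \lambda_2 \leq (1-\lambda_2)/\lambda_2$ combined with the identity $\trel - 1 = \lambda_2/(1-\lambda_2)$ converts this into the stated lower bound $\tmix(\varepsilon) \geq (\trel - 1)\log(1/(2\varepsilon))$.

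No step here is a serious obstacle, as this is a classical consequence of the spectral theory of reversible Markov chains; the only care needed is the constant-chasing that matches the natural outputs of the two arguments (the $\sqrt{\pi_{min}}$ appearing in the $L^2$ bound and the rate $\log(1/\lambda_2)$ appearing in the eigenfunction calculation) to the precise expressions $\log(1/(\varepsilon\pi_{min}))$ and $(\trel-1)\log(1/(2\varepsilon))$ appearing in the statement.
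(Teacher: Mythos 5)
The paper gives no proof of this proposition---it simply cites Theorems 12.3 and 12.4 of Levin, Peres, and Wilmer \cite{lpw}---and your argument is precisely the standard one found there, correctly executed: the $L^2$ (Cauchy--Schwarz plus Parseval) bound for the upper inequality, and the eigenfunction distinguishing statistic together with the elementary bound $-\log\lambda_2 \le (1-\lambda_2)/\lambda_2$ for the lower inequality. One small point worth making explicit: the step $\sum_{i\ge 2}\lambda_i^{2t} f_i(x)^2 \le \lambda_2^{2t}\sum_i f_i(x)^2$ requires $|\lambda_i|\le \lambda_2$ for all $i\ge 2$, which you secure by invoking laziness (so that all eigenvalues are non-negative); the proposition as stated does not hypothesize laziness, but the paper defines $\trel = 1/\gamma$ only for lazy chains and applies the proposition exclusively to the lazy chain $\Mn$, so your reading is the intended one. (The fully general version in \cite{lpw} replaces $\lambda_2$ by $\lambda_* = \max_{i\ge 2}|\lambda_i|$ and defines $\trel = 1/(1-\lambda_*)$, at which point laziness is not needed.)
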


We will bound the spectral gap, and thus the relaxation and mixing times, of the edge-flip Markov chain for dyadic tilings by considering functions on the chain's state space. For $f:\Omega\rightarrow \mathbb{R}$, the {\it variance} of $f$ with respect to a distribution $\pi$ on $\Omega$ can be expressed as: 
\begin{align*}
   \Var_\pi(f) = \sum_{x\in \Omega} \pi(x) \left(f(x) - \mathbb{E}_\pi[f(x)]\right)^2 = \frac{1}{2}\sum_{x,y\in\Omega} \pi(x) \pi(y) (f(x) - f(y))^2.
\end{align*}
 We will only be considering the variance with respect to the uniform distribution on $\Omega$, so the subscript $\pi$ will be omitted. 
 For a given reversible transition matrix $P$ on state space $\Omega$ with stationary distribution $\pi$, the {\it Dirichlet form}, also know as the {\it local variance}, associated to the pair $(P,\pi)$ is,  for any function $f:\Omega\rightarrow\mathbb{R}$,
 \begin{align*}
 \mcE(f) = \frac{1}{2} \sum_{x,y \in \Omega} [f(x)-f(y)]^2 \pi(x) P(x,y).
  \end{align*} 
   As we see in the following well-known proposition, the Dirichlet form and variance of a function $f$ can be used to bound the spectral gap of a transition matrix, and therefore the relaxation time and mixing time of a Markov chain; see, e.g., \cite[Lemma 13.12]{lpw}. 
  \begin{prop}\label{prop:spectralbound}
Given a Markov chain with reversible transition matrix $P$ and stationary distribution $\pi$, the spectral gap $\gamma = 1-\lambda_2$ of $P$ satisfies
\begin{align*}
\gamma = \min_{\substack{f: \Omega\rightarrow \mathbb{R} \\ \Var_\pi(f) \neq 0}} \frac{\mcE(f) }{ \Var_\pi(f)}. 
\end{align*}
  \end{prop}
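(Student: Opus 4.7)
The plan is to prove this via the spectral theorem, exploiting that reversibility makes $P$ self-adjoint with respect to the weighted inner product $\langle f,g\rangle_\pi = \sum_{x \in \Omega} \pi(x) f(x) g(x)$. First I would verify self-adjointness: reversibility gives $\pi(x) P(x,y) = \pi(y) P(y,x)$, so $\langle f, Pg\rangle_\pi = \sum_{x,y} \pi(x) P(x,y) f(x) g(y) = \sum_{x,y} \pi(y) P(y,x) f(x) g(y) = \langle Pf, g\rangle_\pi$. Therefore $P$ has $|\Omega|$ real eigenvalues $1 = \lambda_1 \geq \lambda_2 \geq \dots \geq \lambda_{|\Omega|} \geq -1$ with a $\langle\cdot,\cdot\rangle_\pi$-orthonormal basis of eigenfunctions $\phi_1, \phi_2, \dots$, and we may take $\phi_1 \equiv 1$.

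Next I would rewrite $\mcE(f)$ as an inner product. A direct expansion, followed by using the stochasticity $\sum_y P(x,y) = 1$ and reversibility to symmetrize, gives
\begin{align*}
\mcE(f) &= \tfrac12 \sum_{x,y} \pi(x) P(x,y) \bigl(f(x)^2 - 2 f(x) f(y) + f(y)^2\bigr) \\
&= \sum_x \pi(x) f(x)^2 - \sum_{x,y} \pi(x) P(x,y) f(x) f(y) = \langle f, (I-P)f\rangle_\pi.
\end{align*}
Similarly, since $\mathbb{E}_\pi[f] = \langle f, \phi_1\rangle_\pi$, one has $\Var_\pi(f) = \langle f, f\rangle_\pi - \langle f, \phi_1\rangle_\pi^2$.

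Now I would expand $f = \sum_i c_i \phi_i$ with $c_i = \langle f, \phi_i\rangle_\pi$. Orthonormality and the eigenvalue equation yield
\begin{align*}
\Var_\pi(f) = \sum_{i \geq 2} c_i^2, \qquad \mcE(f) = \langle f, (I-P) f\rangle_\pi = \sum_{i \geq 1} (1-\lambda_i) c_i^2 = \sum_{i \geq 2} (1-\lambda_i) c_i^2,
\end{align*}
where the $i=1$ term vanishes because $\lambda_1 = 1$. The ratio $\mcE(f)/\Var_\pi(f)$ is therefore a convex combination of the numbers $\{1-\lambda_i : i \geq 2\}$ with weights $c_i^2/\sum_{j\geq 2} c_j^2$, and hence lies in $[1-\lambda_2, 1-\lambda_{|\Omega|}]$.

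Finally, I would show the lower bound $1-\lambda_2 = \gamma$ is attained. Taking $f = \phi_2$ makes all weight sit on the $i=2$ coordinate, giving $\mcE(\phi_2)/\Var_\pi(\phi_2) = 1 - \lambda_2 = \gamma$; since $\phi_2$ is orthogonal to $\phi_1$ and has unit $\pi$-norm, $\Var_\pi(\phi_2) = 1 \neq 0$, so $\phi_2$ is admissible. Combined with the lower bound from the previous paragraph, this proves the minimum equals $\gamma$. The proof is essentially bookkeeping once the spectral decomposition is in place, so I do not anticipate a serious obstacle; the only subtle point to make explicit is the symmetrization step that identifies $\mcE(f)$ with the quadratic form $\langle f,(I-P)f\rangle_\pi$.
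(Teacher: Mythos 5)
Your proof is correct and is the standard spectral-decomposition argument: reversibility gives self-adjointness of $P$ on $\ell^2(\pi)$, the Dirichlet form is identified with the quadratic form $\langle f,(I-P)f\rangle_\pi$, and expanding in the eigenbasis exhibits $\mcE(f)/\Var_\pi(f)$ as a convex combination of the values $1-\lambda_i$ for $i\ge 2$, minimized by $f=\phi_2$. The paper does not prove this proposition itself but simply cites Lemma~13.12 of Levin--Peres--Wilmer, which uses exactly this argument, so your proposal matches the reference the paper relies on.
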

 \noindent We will use this proposition in both our upper bound and lower bound proofs.

\section{The Edge-Flip Markov Chain $\Mn$}

     Let $n = 2^k$. For $k \geq 1$, the edge-flip  Markov chain $\Mn$ on the state space $\Omega_k$ of all dyadic tilings of size $2^k$ is given by the following rules. \vspace{2mm} \\ 
 Beginning at any $\sigma_0 \in \Omega_k$, repeat:
   \begin{itemize}
  \item Choose a rectangle $R$ of $\sigma_i$ uniformly at random.
  \item Choose $left$, $right$, $top$, or $bottom$ uniformly at random; let $e$ be the corresponding side of $R$.
  \item If $e$ bisects a rectangle of area $2^{-k+1}$, remove $e$ and replace it with its perpendicular bisector  to obtain $\sigma_{i+1}$ if the result is a valid dyadic tiling; else, set $\sigma_{i+1} = \sigma_i$. 
  \end{itemize}
  An example of an edge-flip move of $\Mn$ is shown in Figure~\ref{fig:edgeflip_ex}(a); two selections of $R$ and $e$ that do not yield valid moves are shown in (b) and (c). Let $P_{k,edge}$ denote the transition matrix of this edge-flip Markov chain and $\gamma_k$ its spectral gap. For every valid edge flip, there are two choices of $(R,e)$ that result in that move. This implies every move between two tilings differing by an edge flip occurs with probability $1/(2n) = 2^{-k-1}$, so all off-diagonal entries of $P_{k,edge}$ are either $2^{-k-1}$ or $0$.
  
   \begin{figure} \centering
  \includegraphics[scale = 0.65]{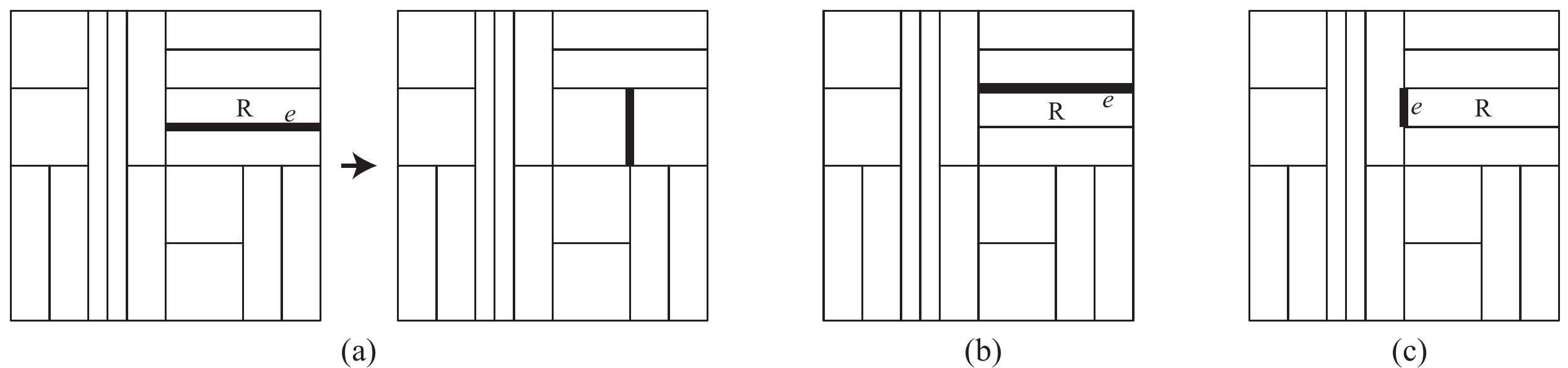}
  \caption{A random rectangle $R$ and one of its edges $e$ are selected in each iteration of $\Mn$. (a) Random choices of $R$ and $e$ as shown  yield a valid edge flip. (b) Random choices of $R$ and $e$ as shown do not yield a valid edge flip  as flipping edge $e$ results in a tiling that is not dyadic. (c) Random choices of $R$ and $e$ as shown do not yield a valid edge flip as flipping edge $e$ does not produce a tiling of the unit square by rectangles. } 
  \label{fig:edgeflip_ex}
  \end{figure}
  
  The Markov chain $\Mn$, in a slightly different form, was introduced by Janson, Randall and Spencer~\cite{jrs02}. Note that $\Mn$
  is lazy, as for any rectangle $R$ of a dyadic tiling at most one of its left and right edges can be flipped to produce another valid dyadic tiling. This is because if $R$'s projection onto the $x$-axis is dyadic interval $ [a2^{-s}, (a+1)2^{-s}]$ for $a,s \in \mathbb{Z}_{\geq 0}$, then flipping its left edge yields a rectangle with $x$-projection $[(a-1)2^{-s}, (a+1)2^{-s}]$ and flipping its right edge yields a rectangle with $x$-projection $[a2^{-s}, (a+2)2^{-s}]$.  If $a$ is even, the first of these intervals is not dyadic, while if $a$ is odd, the second is not, so at most one of these edge flips produces a valid dyadic tiling. Similarly, at most one of $R$'s top and bottom edges yields a valid edge flip. This implies in each iteration with probability at least $1/2$ a pair $(R,e)$ is selected that does not yield a valid edge flip move. 
  
  It was previously shown that this Markov chain is irreducible in \cite{jrs02}, so $\Mn$ is ergodic and thus has a unique stationary distribution. The uniform distribution satisfies the detailed balance equation, implying both that $\Mn$ is reversible and that its stationary distribution is uniform on~$\Omega_k$. 
  
  While we index this edge-flip Markov chain for dyadic tilings of size $n = 2^k$ by $k$ instead of by~$n$, it is important to keep in mind that we wish to show the mixing time of $\Mn$ is polynomial in $n$, not polynomial in $k$.

\subsection{The Block Dynamics Markov Chain $\Mnb$}\label{sec:block}

To analyze the mixing time of Markov chain $\Mn$, we will appeal to a similar Markov chain that uses larger block moves instead of single edge flips. We use in a crucial way the bijection between tilings in $\Omega_{k-1}$ and the left or right (resp. top or bottom) half of a tiling in $\Omega_k$ that has a vertical (resp. horizontal) bisector, as discussed in the proof of Proposition~\ref{thm:recurrence}.

     For $k \geq 2$, the block dynamics  Markov chain $\Mnb$ on the state space $\Omega_k$ of all dyadic tilings of size $2^k$ is given by the following rules.
  \vspace{2mm} \\ 
 Beginning at any dyadic tiling $\sigma_0$, repeat:
   \begin{itemize}
     \item Uniformly at random choose a tiling $\rho \in \Omega_{k-1}$. 
  \item Uniformly at random choose $Left$, $Right$, $Top$, or $Bottom$.
  \item To obtain $\sigma_{i+1}$:
  \begin{itemize}
  \item If $Left$ was chosen and $\sigma$ has a vertical bisector, retile $\sigma$'s left half with $\rho$, under the mapping $x \rightarrow x/2$.
  \item  If $Right$ was chosen and $\sigma$ has a vertical bisector, retile $\sigma$'s right half with $\rho$, under the mapping $x \rightarrow (x+1)/2$.
    \item If $Bottom$ was chosen and $\sigma$ has a horizontal bisector, retile $\sigma$'s bottom half with $\rho$, under the mapping $y \rightarrow y/2$.
  \item  If $Top$ was chosen and $\sigma$ has a horizontal bisector, retile $\sigma$'s top half with $\rho$, under the mapping $y \rightarrow (y+1)/2$.
  \end{itemize}
  \item Else, set $\sigma_{i+1} = \sigma_i$. 
  \end{itemize}
  Let $P_{k,block}$ be the transition matrix of this Markov chain and let $\gamma_{k,block}$ be its spectral gap. Any valid nonstationary transition of $\Mnb$ occurs with probability $1/(4|\Omega_{k-1}|)$. 
   This Markov chain is not lazy, but it is aperiodic, irreducible, and reversible.  This implies it is ergodic and thus has a unique stationary distribution, which by detailed balance is uniform on  $\Omega_k$.

\section{A Polynomial upper bound on the mixing time of $\Mn$}

Recall we wish to show the mixing time of $\Mn$ is polynomial in $n = 2^k$, not polynomial in $k$.
We show the spectral gap $\gamma_k$ of $\Mn$ and the spectral gap $\gamma_{k-1}$ of $\M_{k-1}$ differ by a multiplicative constant (specifically, $1/17$) by appealing to the Dirichlet forms of both of these Markov chains as well as the block dynamics Markov chain $\Mnb$. We can then use recursion to show $\gamma_k$ is bounded below by $(1/17)^k$, which, because $k = \log n$, gives a polynomial upper bound on the relaxation time and thus on the mixing time of $\Mn$. 

For any function $f:\Omega_k\rightarrow \mathbb{R}$, we will denote the Dirichlet form of $f$ with respect to transition matrix $P_{k,edge}$ and the uniform stationary distribution as $\mcE_{k,edge}(f)$. The Dirichlet form of $f$ with respect to transition matrix $P_{k,block}$ and the uniform stationary distribution will be $\mcE_{k,block}(f)$. 
We will let the variance of function $f$ on $\Omega_k$ with respect to the uniform stationary distribution be $\Var_k(f)$. Here the $k$ indicates which state space $\Omega_k$  we are considering, rather than which distribution on $\Omega_k$ the variance is taken with respect to; all variances we consider will be with respect to the uniform distribution. 

Because we consider two different Markov chains on the same state space $\Omega_k$, there are two different notions of adjacencies on this state space, each corresponding to the moves of one of these Markov chains. For $x,y$ in $\Omega_k$, we say $x \sim_e y$ if $x$ and $y$ differ by a single edge flip move of $\Mn$ and $x\sim_by$ if $x$ and $y$ differ by a single move of the block dynamics chain $\Mnb$. More specifically, if $x$ and $y$ differ by a retiling of their left half (implying $x$ and $y$ both have a vertical bisector and are the same on their right half), we say $x \sim_L y$; then $x \sim_R y$, $x \sim_T y$, and $x \sim_B y$ are defined similarly for the right, top, and bottom halves.

\begin{thm} \label{thm:spectral-bound}
For any $k \geq 2$, the spectral gap $\gamma_k$ of the edge-flip Markov chain $\Mn$ satisfies
\begin{align*}\gamma_k \geq \gamma_{k,block}\cdot \gamma_{k-1}\end{align*}
\end{thm}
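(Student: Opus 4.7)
The plan is to apply the variational characterization of $\gamma_k$ from Proposition~\ref{prop:spectralbound}. Fix an arbitrary $f:\Omega_k\to\mathbb{R}$; by that proposition it suffices to establish the Poincar\'e inequality
\begin{align*}
\mcE_{k,edge}(f) \;\geq\; \gamma_{k,block}\,\gamma_{k-1}\,\Var_k(f).
\end{align*}
Since the spectral gap of $\Mnb$ already gives $\mcE_{k,block}(f)\geq \gamma_{k,block}\,\Var_k(f)$, the statement reduces to proving the \emph{comparison inequality}
\begin{align*}
\mcE_{k,edge}(f) \;\geq\; \gamma_{k-1}\,\mcE_{k,block}(f),
\end{align*}
which is the heart of the argument and where I will spend the effort.

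To prove the comparison I would decompose the block Dirichlet form by the four directions of the block move: write $\mcE_{k,block}(f)=\mcE^L(f)+\mcE^R(f)+\mcE^T(f)+\mcE^B(f)$, where $\mcE^L(f)$ collects the ordered pairs $x\sim_L y$, etc. I focus on $\mcE^L(f)$; the other three are symmetric. For each fixed right-half configuration $\sigma_R\in\Omega_{k-1}$, let $g_{\sigma_R}:\Omega_{k-1}\to\mathbb{R}$ be defined by $g_{\sigma_R}(\tau)=f(\tau\oplus_L\sigma_R)$, where $\tau\oplus_L\sigma_R$ denotes the tiling in $\Omega_k$ whose left half (under $x\mapsto x/2$) is $\tau$ and whose right half (under $x\mapsto (x+1)/2$) is $\sigma_R$. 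Using $P_{k,block}(x,y)=1/(4|\Omega_{k-1}|)$ for each non-stationary $\sim_L$ transition, a direct rearrangement yields
\begin{align*}
\mcE^L(f) \;=\; \frac{|\Omega_{k-1}|}{4|\Omega_k|}\sum_{\sigma_R\in\Omega_{k-1}} \Var_{k-1}(g_{\sigma_R}).
\end{align*}
I would then apply the inductive spectral gap slicewise, $\Var_{k-1}(g_{\sigma_R})\leq \gamma_{k-1}^{-1}\mcE_{k-1}(g_{\sigma_R})$, and observe that every edge flip of $\M_{k-1}$ applied to $\tau$ lifts under the dilation bijection of Proposition~\ref{thm:recurrence} to an edge flip of $\Mn$ applied to $\tau\oplus_L\sigma_R$ that is strictly inside the left half, and vice versa. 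The only wrinkle is that each such flip has probability $2^{-k-1}$ in $\Mn$ but $2^{-k}$ in $\M_{k-1}$, contributing a factor of $2$. After arithmetic this gives $\mcE^L(f)\leq \frac{1}{2\gamma_{k-1}}\,\mcE_{k,edge}^L(f)$, where $\mcE_{k,edge}^L(f)$ is the portion of $\mcE_{k,edge}(f)$ coming from flips strictly inside the left half; the analogous inequalities hold for $R,T,B$.

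To finish I would sum over $D\in\{L,R,T,B\}$. On the block side the sum equals $\mcE_{k,block}(f)$ by construction. On the edge side, any single edge flip of $\Mn$ is counted in at most one of $\mcE_{k,edge}^L,\mcE_{k,edge}^R$ (since the two halves are disjoint) and in at most one of $\mcE_{k,edge}^T,\mcE_{k,edge}^B$, so
\begin{align*}
\mcE_{k,edge}^L(f)+\mcE_{k,edge}^R(f)+\mcE_{k,edge}^T(f)+\mcE_{k,edge}^B(f) \;\leq\; 2\,\mcE_{k,edge}(f).
\end{align*}
The factor of $2$ here cancels the $\tfrac{1}{2}$ in the slicewise bound, yielding $\mcE_{k,edge}(f)\geq \gamma_{k-1}\,\mcE_{k,block}(f)$ and hence the theorem. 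The main obstacle, as far as I can see, is not conceptual but combinatorial bookkeeping: tracking the two factors of $2$ (one from the transition-probability ratio between $\Mn$ and $\M_{k-1}$, one from at-most-double-counting in the direction decomposition) and verifying that the lifting of $\M_{k-1}$ edge flips really gives a bijection with the left-half edge flips of $\Mn$ for tilings with a vertical bisector.
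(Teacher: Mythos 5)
Your proposal is correct and follows essentially the same route as the paper's proof: decompose $\mcE_{k,block}$ by the four block directions, identify each directional term as a sum of slicewise variances over $\Omega_{k-1}$, apply the Poincar\'e inequality for $\M_{k-1}$ to each slice, lift the resulting $\M_{k-1}$ edge flips to $\Mn$ edge flips via the dilation bijection, and control the double-counting (at most one of $L/R$ and at most one of $T/B$ per flip). The bookkeeping you flag — the factor of $2$ from $P_{k-1,edge}=2^{-k}$ versus $P_{k,edge}=2^{-k-1}$, canceling against the factor of $2$ from at-most-double-counting in the direction sum — is exactly how the constants work out in the paper.
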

\begin{proof}
We begin by computing the Dirichlet forms for block dynamics and then for the edge-flip dynamics, which will allow comparison of their spectral gaps. Recall that for any function $f:\Omega_k \rightarrow \bbR$, 
\begin{align*}
\mcE_{k,block}(f) = \frac{1}{2} \sum_{x\sim_b y \in \Omega_k} \pi(x)P_{k,block}(x,y) \left(f(x)-f(y)\right)^2 .
\end{align*}
This sum can be split up into four terms, depending on whether $x$ and $y$ differ by a retiling of their left, right, top, or bottom halves. We now analyze the first of these terms, containing all pairs $x,y$ differing by a retiling of their left halves. For $x_L,x_R \in \Omega_{k-1}$, by $x_Lx_R$ below we mean the tiling in $\Omega_{k}$ with a vertical bisector whose left half is $x_L$ under  the map $x \rightarrow x/2$ and whose right half is $x_R$ under  the map $x \rightarrow (x+1)/2$. 
\begin{align*}
\mcE^L_{k,block} 
&= \frac{1}{2} \sum_{x \sim_L y}   \frac{1}{|\Omega_k|} \frac{1}{4|\Omega_{k-1}|} (f(x) - f(y))^2
\\&= \frac{1}{8} \sum_{x_R \in \Omega_{k-1}} \sum_{x_L, y_L \in \Omega_{k-1}}   \frac{1}{|\Omega_k|} \frac{1}{|\Omega_{k-1}|} (f(x_Lx_R) - f(y_Lx_R))^2
\\&= \frac{1}{4}  \sum_{x_R \in \Omega_{k-1}} \frac{|\Omega_{k-1}|}{|\Omega_k|}\left( \frac{1}{2} \sum_{x_L, y_L \in \Omega_{k-1}} \frac{1}{|\Omega_{k-1}|^2} (f(x_Lx_R) - f(y_L x_R))^2 \right).
\end{align*}
For each $x_R \in \Omega_{k-1}$, the function $f|_{ x_R}: \Omega_{k-1} \rightarrow \bbR $ given by $f|_{x_R}(z) = f(z x_R)$ has variance $\Var_{k-1}(f|_{x_R})$ (with respect to the uniform distribution) 
that is exactly equal to the term in parentheses above. 
By appealing to Proposition~\ref{prop:spectralbound}, we can bound this variance using both the Dirichlet form of function $f|_{x_R}$ associated to transition matrix $P_{k-1,edge}$ and the spectral gap $\gamma_{k-1}$ of this Markov chain $\M_{k-1}$. Thus, 
\begin{align*}
\mcE^L_{k,block} = \frac{1}{4}  \sum_{x_R \in \Omega_{k-1}} \frac{|\Omega_{k-1}|}{|\Omega_k|} \Var_{k-1}(f|_{x_R})
& \leq \frac{1}{4}  \sum_{x_R \in \Omega_{k-1}} \frac{|\Omega_{k-1}|}{|\Omega_k|} \frac{\mcE_{k-1,edge}(f|_{ x_R})}{\gamma_{k-1}}
\end{align*}
We now see that the Dirichlet form for the edge-flip Markov chain on $\Omega_{k-1}$ is 
\begin{align*}
\mcE_{k-1,edge}(f|_{x_R}) &= \frac{1}{2} \sum_{\substack{x_L,y_L \in \Omega_{k-1} \\  x_{L} \sim_e y_L}} \pi(x_L) P(x_L,y_L) \left(f(x_Lx_R) - f(y_Lx_R)\right)^2
\\&= \sum_{ \substack{x_L,y_L \in \Omega_{k-1} \\  x_{L} \sim_e y_L}} \frac{1}{|\Omega_{k-1}|} \frac{1}{2n}\left(f(x_Lx_R) - f(y_Lx_R)\right)^2\end{align*}
Using this expression, we see that
\begin{align*}
\mcE^L_{k,block} (f) &\leq \frac{1}{4\gamma_{k-1}}  \sum_{x_{ R} \in \Omega_{k-1}} \frac{|\Omega_{k-1}|}{|\Omega_k|}  \left( \sum_{\substack{x_L,y_L \in \Omega_{k-1} \\  x_{L} \sim_e y_L}}  \frac{1}{|\Omega_{k-1}|} \frac{1}{2n}\left(f(x_{\small L} x_{\small R}) - f(y_Lx_R)\right)^2\right)
\\&= \frac{1}{4\gamma_{k-1}}  \sum_{\substack{ x,y \in \Omega_{k} \\ x\sim _e y \\ x \sim_{\tiny L}  y}}
\frac{1}{|\Omega_{k}|} \frac{1}{2n}\left(f(x) - f(y)\right)^2.
\end{align*}
We now compare this to the Dirichlet form  for the edge flip Markov chain on $\Omega_k$, which we recall is 
\begin{align*}
\mcE_{k} (f)= \frac{1}{2} \sum_{\substack{ x,y \in \Omega_{k} \\ x\sim _e y }}\frac{1}{|\Omega_{k}|} \frac{1}{2n}\left(f(x) - f(y)\right)^2.
\end{align*}
We note for every $x,y \in\Omega_k$ such that $x \sim_e y$, at least one of and at most two of $x\sim_Ly$, $x\sim_Ry$, $x\sim_Ty$, and $x\sim_By$ hold. Thus each summand of $\mcE_k(f) $ appears at most twice as a summand of $$\mcE_{k,block} (f) = \mcE^L_{k,block} (f) +\mcE^R_{k,block} (f) +\mcE^T_{k,block} (f) +\mcE_{k,block}^B (f) .$$
It follows that
$$
\mcE_{k,block} (f) \leq  \frac{1}{4\gamma_{k-1}} \cdot 2 \cdot \left( 2 \mcE_k(f)\right) =\frac{\mcE_{k,edge}(f)}{\gamma_{k-1}}.
$$
Note this implies that for any $f$,
$$
\Var_k(f) \leq \frac{\mcE_{k,block}(f)}{\gamma_{k,block} } \leq \frac{\mcE_{k,edge}(f)}{\gamma_{k,block} \cdot\gamma_{k-1}}.
$$
Let $f$ be chosen to be the function achieving equality in 
$$\Var_k(f) \leq \frac{\mcE_{k,edge} (f)}{\gamma_{k}}.$$
We conclude 
$$
\gamma_k = \frac{\mcE_{k,edge}(f)}{\Var_k(f)} \geq \gamma_{k,block}\cdot \gamma_{k-1}. 
$$

\end{proof}

We will prove in Section~\ref{sec:block-constant} that the spectral gap of the block dynamics Markov chain is at least 1/17 for sufficiently large $k$. This can be used to bound the spectral gap, the relaxation time, and finally the mixing time of $\Mn$. 

\begin{thm}\label{thm:block17}
   There exists a positive integer $k_0$ such that for all 
   $k \geq k_0$, the spectral gap $\gamma_{k,block}$ is at least $1/17$.
\end{thm}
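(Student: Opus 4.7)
The plan is to prove Theorem~\ref{thm:block17} via a Wasserstein-contraction argument for an explicit Markovian coupling of two copies of $\Mnb$, invoking the standard correspondence between contraction in a metric $d$ and a lower bound on the spectral gap (see, e.g., Chapter 13 of \cite{lpw}). I would couple two chains started at $X, Y \in \Omega_k$ by using the same uniformly random side $S \in \{L, R, T, B\}$ and the same uniformly random sub-tiling $\rho \in \Omega_{k-1}$ in both copies, and applying the corresponding block move whenever the prerequisite bisector is present. The goal is to exhibit a metric $d$ on $\Omega_k$ satisfying $\mathbb{E}[d(X_1, Y_1) \mid X_0, Y_0] \leq (1 - 1/17)\, d(X_0, Y_0)$ uniformly in $X_0, Y_0$, from which $\gamma_{k,block} \geq 1/17$ follows.

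For the metric, I would take a weighted average $d(X, Y) = \alpha\, d_1(X,Y) + \beta\, d_2(X,Y)$ of two notions of distance between tilings. Here $d_1$ is a ``block-disagreement'' distance measuring how much of $X$ and $Y$ agrees through their natural hierarchical decomposition---for instance, counting halves on which they disagree when their bisector structures align---while $d_2$ is an auxiliary ``bisector-signature'' distance penalizing the extent to which $X$ and $Y$ differ on which of the vertical and horizontal bisectors they possess. The auxiliary term $d_2$ is essential because the valid block moves from a tiling depend on which bisector(s) it carries, so this structural asymmetry between $X$ and $Y$ must be absorbed by the metric for the coupling to contract uniformly.

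The main work is a case analysis over (a) the bisector signatures of $X$ and $Y$ (V only, H only, or both) and (b) the chosen side $S$. In favorable cases both chains have the bisector required by $S$ and perform the same update, so the coupled $\rho$ forces agreement on the chosen half and $d_1$ strictly decreases; in unfavorable cases only one chain moves, which may increase $d_1$ but also changes $d_2$, and the weights $\alpha, \beta$ are tuned so that every case yields contraction by at least $1 - 1/17$. The asymptotic value $f_k \to (\sqrt{5}-1)/2$ from Lemma~\ref{lem:fk_limit} enters here as the probability that a freshly retiled half introduces (or fails to introduce) a new bisector, and the hypothesis $k \geq k_0$ is precisely what lets one replace $f_k$ by its limit to close the inequalities.

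The main obstacle I expect is the mismatched-signature cases, where the drawn side $S$ triggers a move in one chain but not the other. In these cases the coupled $\rho$ cannot directly enforce agreement on any half, so the burden falls on $d_2$ to absorb the asymmetry while $d_1$ slowly contracts over favorable side-selections. Optimizing the ratio $\alpha/\beta$ so that losses in every unfavorable case are dominated by gains in the favorable ones is the delicate quantitative step producing the specific constant $1/17$.
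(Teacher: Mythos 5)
Your general plan is the same as the paper's: couple two copies of $\Mnb$ by reusing the same side $S$ and the same fresh sub-tiling $\rho$, then exhibit a metric of the form $\alpha d_1 + \beta d_2$ that contracts by a factor $1 - 1/17$ in expectation, and pass to a spectral gap bound via the Chen/path-coupling correspondence. You also correctly identify that $f_k \to (\sqrt{5}-1)/2$ is what makes the critical cases close and that $k \geq k_0$ is needed to replace $f_k$ by a value near its limit. So the overall architecture is right.

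The gap is in the choice of $d_2$. You define the ``bisector-signature'' distance in terms of the two full bisectors (vertical, horizontal). That is too coarse for the coupling to see progress. A single block move retiles one half; it can at best create or destroy a \emph{half}-bisector (a new horizontal edge across only the left half, say), and only rarely toggles a full bisector (which requires the complementary half to already carry the matching half-bisector). In the hardest case you flagged---$X$ has only a vertical bisector, $Y$ has only a horizontal one, no overlap---your $d_2$ is stuck at its maximum and does not move under a typical step, and your $d_1$ is essentially undefined because, as you wrote, it ``counts halves on which they disagree \emph{when their bisector structures align}''. So nothing in your metric registers the genuine progress made when a retiled half of $X$ acquires a half-bisector that $Y$ already has, and the contraction inequality fails exactly in the cases you called the main obstacle. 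The paper's fix is to replace the full-bisector signature by a count over the four \emph{half}-bisectors (left, right, top, bottom), and to replace the ``agreeing-halves'' count by an agreeing-\emph{quadrants} count (four quadrants rather than four halves). With those two refinements the metric sees every unit of progress the coupling can make---a retiling of the left half can remove a left-half-bisector disagreement and align up to two quadrants---and the cross-signature cases can then be closed. Without the half-bisector refinement your $d_2$ is a lump that cannot absorb the asymmetry you are asking it to absorb.

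One further point: in the aligned cases you propose counting disagreeing \emph{halves}, but quadrants are strictly finer and are what the argument actually needs, because a block move on (say) the left half touches two quadrants, and you must credit the coupling for matching each of them; with halves as the unit, the case where $X$ and $Y$ already agree on one of the two left quadrants but not the other is handled too crudely, and the critical inequalities (the ones that are already tight at $1-1/17$ in the paper) do not survive the loss.
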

\begin{proof}
See Section~\ref{sec:block-constant}. We introduce a distance metric on dyadic tilings, and then give a coupling where the distance between two tilings decreases in expectation after one iteration by a multiplicative factor of $1-\frac{1}{17}$ for all $k$ sufficiently large. By a result of Chen~\cite{Chen}, this implies the~theorem.
\end{proof}


\noindent We are now ready to prove our first main theorem, Theorem~\ref{thm:upperbound}, which states that the relaxation time of $\Mn$ for $n = 2^k$ is $O(n^{\log 17})$ and its mixing time is $O(n^{1+\log 17})$

\begin{proof}[Proof of Theorem~\ref{thm:upperbound}]
By Theorems~\ref{thm:spectral-bound} and~\ref{thm:block17}, the spectral gap of $\Mn$ satisfies
\begin{align*}
   \gamma_k \geq  \frac{1}{17} \gamma_{k-1} 
   \geq 17^{-(k-k_0)} \gamma_{k_0}, 
\end{align*}
where $k_0$ is the value from Theorem~\ref{thm:block17}.
Since $\gamma_{k_0}$ is a constant that does not depend on $n$, we obtain
$$
   \gamma_k = \Omega\left( 17^{-k}\right)= \Omega\left( n^{-\log 17}\right) = \Omega\left( n^{-4.09}\right).
$$
Because $\Mn$ is a lazy Markov chain, its relaxation time satisfies 
$$
   \trel = O\left(n^{\log 17}\right).
$$
To use this to bound the mixing time of $\Mn$, we appeal to Proposition~\ref{prop:rel-mix}, though we first must calculate $\pi_{min}$. For $\pi$ the uniform distribution, ${\min_{x \in \Omega_k} \pi(x) = 1/|\Omega_k|}$. By the asymptotics of dyadic tilings, a loose bound is ${1/\pi_{min} = |\Omega_k| < 2^n} $. This implies
\begin{align*}
   \tmix = O\left(n^{1+\log 17}\right).
\end{align*}
\end{proof}

\section{Lower bound on the mixing time of $\M_n$}\label{sec:lowerbound}
In this section we give the proof of Theorem~\ref{thm:lowerbound}.
%
For this, we define the following subsets of $\Omega_k$:
\begin{align*}
   \Omega_k^+ &= \left\{x\in\Omega_k \colon \text{ $x$ has both a horizontal and a vertical bisector}\right\}, \\
   \Omega_k^| &= \left\{x\in\Omega_k \colon \text{ $x$ has a vertical bisector}\right\}, \text{ and} \\
   \Omega_k^- &= \left\{x\in\Omega_k \colon \text{ $x$ has a horizontal bisector}\right\}.
\end{align*}
By definition, we have $\Omega_k^+ = \Omega_k^| \cap \Omega_k^-$.
We start with the following simple lemma.
\begin{lemma}\label{lem:plus}
   For all $k\geq 2$, we have
   $$
      \frac{|\Omega_k|}{|\Omega_k^+|}=\frac{2}{f_k^2}-1\geq 2\phi+1. 
   $$
   Furthermore, $\lim_{k\to\infty}\frac{|\Omega_k|}{|\Omega_k^+|}=2\phi+1
   $, where $\phi=\frac{\sqrt{5}+1}{2}$ is the golden ratio.
\end{lemma}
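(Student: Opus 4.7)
The plan is to compute $|\Omega_k^+|$ and $|\Omega_k|$ explicitly in terms of the sequence $\{A_k\}$, reduce the ratio to a simple expression in $f_k$, and then invoke Lemma~\ref{lem:fk_limit}.

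First I would recall from the proof of Proposition~\ref{thm:recurrence} that a tiling with both a horizontal and a vertical bisector is determined by a dyadic tiling of size $2^{k-2}$ in each of its four quadrants (via the four dilation maps of the form $(x,y)\mapsto(2x-a,2y-b)$), giving $|\Omega_k^+| = A_{k-2}^4$. Combining with $|\Omega_k| = A_k = 2A_{k-1}^2 - A_{k-2}^4$ from Proposition~\ref{thm:recurrence} and the definition $f_k = A_{k-2}^2 / A_{k-1}$, a direct calculation yields
\begin{align*}
   \frac{|\Omega_k|}{|\Omega_k^+|}
   = \frac{2A_{k-1}^2 - A_{k-2}^4}{A_{k-2}^4}
   = \frac{2A_{k-1}^2}{A_{k-2}^4} - 1
   = \frac{2}{f_k^2} - 1,
\end{align*}
which gives the first equality in the lemma.

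For the inequality, the key observation is the golden-ratio identity $\phi^2 = \phi + 1$, which makes the target bound $2\phi + 1 = 2\phi^2 - 1$; hence the inequality $\frac{2}{f_k^2}-1 \geq 2\phi+1$ is equivalent to $f_k \leq 1/\phi = (\sqrt{5}-1)/2$. This is exactly the upper bound supplied by Lemma~\ref{lem:fk_limit}, so the inequality follows immediately (in fact strictly, since Lemma~\ref{lem:fk_limit} gives $f_k < (\sqrt{5}-1)/2$ for every finite $k$).

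Finally, for the limit statement, since Lemma~\ref{lem:fk_limit} also gives $\lim_{k\to\infty} f_k = (\sqrt{5}-1)/2 = 1/\phi$, and the map $t \mapsto 2/t^2 - 1$ is continuous at $1/\phi$, we obtain
\begin{align*}
   \lim_{k\to\infty}\frac{|\Omega_k|}{|\Omega_k^+|} = \frac{2}{(1/\phi)^2} - 1 = 2\phi^2 - 1 = 2\phi + 1,
\end{align*}
using $\phi^2 = \phi + 1$ in the last step. There is no real obstacle here; the only thing to be careful about is the algebraic rearrangement into the form $2/f_k^2 - 1$ and the clean use of the golden-ratio identity to match the stated bound $2\phi+1$.
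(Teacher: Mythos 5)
Your proof is correct and follows essentially the same route as the paper's: express the ratio via $|\Omega_k^+| = A_{k-2}^4$ and the recurrence from Proposition~\ref{thm:recurrence} to get $2/f_k^2 - 1$, then invoke the bound and limit on $f_k$ from Lemma~\ref{lem:fk_limit} together with $\phi^2 = \phi+1$. The only difference is cosmetic: you spell out the algebraic equivalences more explicitly, and you correctly note the inequality is in fact strict for finite $k$.
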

\begin{proof}
   Using that $|\Omega_k^+|=|\Omega_{k-2}|^4$, and Proposition~\ref{thm:recurrence}, we have
   $$
      \frac{|\Omega_k|}{|\Omega_k^+|}
      = \frac{2|\Omega_{k-1}|^2-|\Omega_{k-2}|^4}{|\Omega_{k-2}|^4}
      =\frac{2}{f_k^2}-1.
   $$
   By Lemma~\ref{lem:fk_limit},  $f_k \leq \frac{\sqrt{5}-1}{2}=\frac{1}{\phi}=\lim_{k\to\infty}f_k$. This, along with the identity $\phi^2 = 1+\phi$, implies the lemma. 
\end{proof}

We will also require the following technical estimate.
\begin{lemma}\label{lem:upsilon}
   For any $k\geq 2$, we have 
   $$
      \frac{1}{|\Omega_k|} \prod_{i=0}^{k-2} |\Omega_{i}|^2 
      \leq \phi^{-2k+2}
   $$
\end{lemma}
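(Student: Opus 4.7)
The plan is to reduce the left-hand side to a telescoping product of the $f_j$'s already controlled by the earlier lemmas. Defining $B_k := \frac{1}{|\Omega_k|} \prod_{i=0}^{k-2} |\Omega_i|^2$, a direct computation for $k \geq 2$ gives
\[
\frac{B_k}{B_{k-1}} = \frac{|\Omega_{k-2}|^2 \, |\Omega_{k-1}|}{|\Omega_k|}.
\]
Using the definition $f_k = |\Omega_{k-2}|^2/|\Omega_{k-1}|$ from Section~\ref{sec:background}, the right-hand side rewrites as $f_k \cdot |\Omega_{k-1}|^2/|\Omega_k|$. Dividing the recurrence $|\Omega_k| = 2|\Omega_{k-1}|^2 - |\Omega_{k-2}|^4$ from Proposition~\ref{thm:recurrence} through by $|\Omega_{k-1}|^2$ yields $|\Omega_{k-1}|^2/|\Omega_k| = 1/(2-f_k^2)$, which Lemma~\ref{lem:fk_recurrence} identifies as $f_{k+1}$. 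Thus $B_k/B_{k-1} = f_k \, f_{k+1}$.

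Next I would iterate this identity down to the base case $B_1 = 1/|\Omega_1| = 1/2$ (the empty product equals $1$), obtaining
\[
B_k = \tfrac{1}{2} \prod_{j=2}^{k} f_j f_{j+1}.
\]
Lemma~\ref{lem:fk_limit} gives $f_j < (\sqrt{5}-1)/2 = 1/\phi$ for every $j$, so each factor is at most $\phi^{-2}$; collecting the $k-1$ factors produces
\[
B_k \;\leq\; \tfrac{1}{2}\,\phi^{-2(k-1)} \;\leq\; \phi^{-2k+2},
\]
which is the claim.

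The argument is essentially algebraic manipulation of the recurrences already developed in Section~\ref{sec:background}, so I do not anticipate a serious obstacle. The only things to watch are the off-by-one indexing in the base case --- which I would sanity-check against $B_2 = 1/7 \leq \phi^{-2} \approx 0.382$ --- and the fact that Lemma~\ref{lem:fk_recurrence} must be applied at shifted index $k+1$ rather than $k$; reading that lemma as $f_{k+1} = 1/(2 - f_k^2)$ makes the matching transparent, and the extra factor $\tfrac{1}{2}$ in the base case gives enough slack that the non-strict bound $f_j \leq 1/\phi$ suffices.
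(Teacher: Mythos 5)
Your proof is correct, and it takes a genuinely different route from the paper. The paper proves the bound combinatorially: it constructs an explicit family $\Upsilon_k\subset\Omega_k^+$ of tilings with $|\Upsilon_k|=\prod_{i=0}^{k-2}|\Omega_i|^2$, and then bounds $|\Upsilon_k|/|\Omega_k|$ from above by a telescoping product of ratios $\frac{|\Omega_j^+|}{|\Omega_j|}$ and $\frac{|\Omega_j^||}{|\Omega_j|}$, ultimately feeding in Lemma~\ref{lem:plus} and splitting into even and odd $k$. You instead observe that $B_k:=\frac{1}{|\Omega_k|}\prod_{i=0}^{k-2}|\Omega_i|^2$ satisfies $B_k/B_{k-1}=f_k f_{k+1}$ --- the first factor by definition of $f_k$, the second by dividing Proposition~\ref{thm:recurrence} through by $|\Omega_{k-1}|^2$ and invoking Lemma~\ref{lem:fk_recurrence} at shifted index --- then telescope from $B_1=1/2$ and apply $f_j<1/\phi$ from Lemma~\ref{lem:fk_limit}. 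The indexing is sound: the ratio identity uses the recurrence $f_{j+1}=1/(2-f_j^2)$, which holds for $j\geq 2$, exactly the range over which you multiply; your sanity check $B_2=\frac{1}{2}f_2f_3=\frac{1}{2}\cdot\frac{1}{2}\cdot\frac{4}{7}=\frac{1}{7}$ matches the direct computation $|\Omega_0|^2/|\Omega_2|=1/7$, and the final factor of $\frac{1}{2}$ is slack that is simply discarded. Your argument is shorter, avoids the even/odd case split, and stays entirely within the $f_k$ machinery already in place from Section~\ref{sec:background}. What the paper's construction buys that yours does not is a combinatorial reading of $\prod_{i=0}^{k-2}|\Omega_i|^2$ as the cardinality of a concrete subfamily of tilings; that interpretation is what motivates the appearance of this product (it mirrors the construction of $\partial\Omega_k^|$ later in the lower-bound proof), but it is not needed for the inequality itself.
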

\begin{proof}
   We will show how to estimate $\prod_{i=0}^{k-2} |\Omega_{i}|^2 $ via the construction of a tiling in $\Omega_k$.
   We start with a tiling with both a horizontal and a vertical bisector, as in Figure~\ref{fig:constr}(a). 
   Then we inductively do the following. 
   Both quadrants of the left half are tiled independently with a uniformly random tiling from $\Omega_{k-2}$.
   In the top-right quadrant, we add a vertical bisector and complete the two halves of this quadrant with independent, uniformly random tilings from $\Omega_{k-3}$. 
   Finally, in the bottom-right quadrant, we create a horizontal and a vertical bisector, reaching the tiling in Figure~\ref{fig:constr}(b).
   Then we take this bottom-right quadrant, and iterate the procedure above; see Figure~\ref{fig:constr}(c,d) for the configurations after one and two more iterations.
   \begin{figure}
      \begin{center}
         \includegraphics[scale=.8]{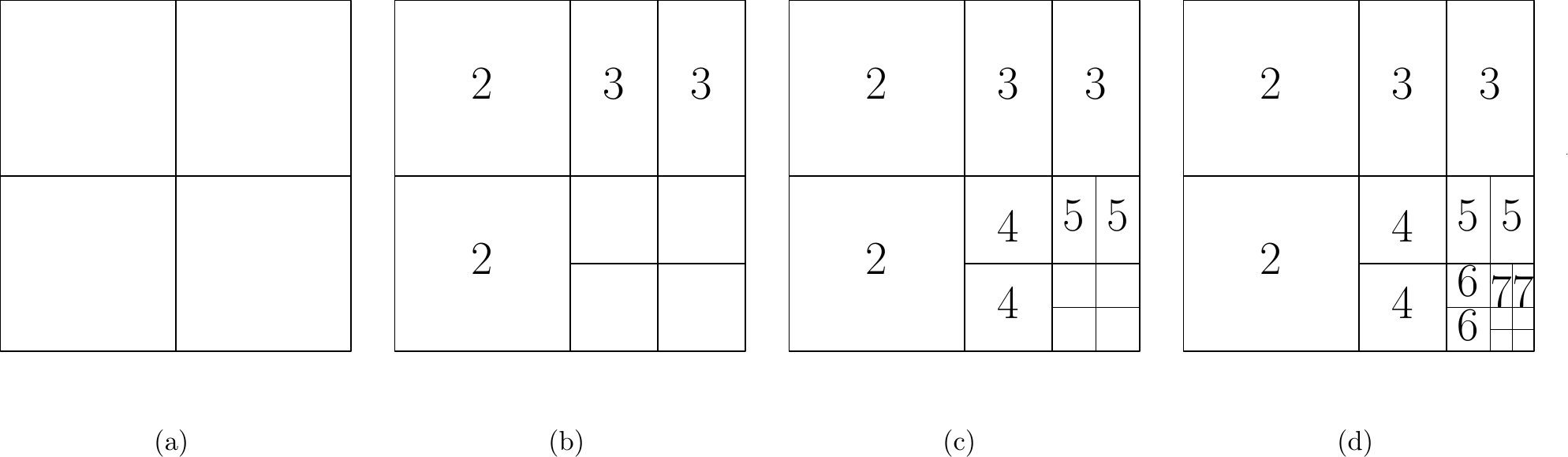}
      \end{center}\vspace{-.5cm}
      \caption{The construction of a tiling to count $\prod_{i=0}^{k-2} |\Omega_i|^2$. 
         A rectangle with number $a$ indicates that we tile it with a tiling from $\Omega_{k-a}$.}
      \label{fig:constr}
   \end{figure}
   This iteration continues until creating a bisector will result in rectangles of area less than $2^{-k}$. In the case where an attempt is made to divide a rectangle of area $2^{-k+1}$ into four rectangles of equal area by adding both a horizontal and vertical bisector, we instead add just a horizontal bisector, resulting in two rectangles each of area $2^{-k}$.

   Let $\Upsilon_k\subset \Omega_k$ be the set of tilings obtained in this way.
   Note that the number of tilings in $\Upsilon_k$ is exactly $\prod_{i=0}^{k-2} |\Omega_i|^2$.
   Since $\Upsilon_k\subset \Omega_k^+$, we have that 
   $\frac{|\Upsilon_k|}{|\Omega_k|}\leq \frac{|\Omega_k^+|}{|\Omega_k|}$, where the first expression is exactly the value we wish to bound. Using the construction above until Figure~\ref{fig:constr}(b), we obtain that 
   $$
      \frac{|\Upsilon_k|}{|\Omega_k|}
      \leq \frac{|\Omega_k^+|}{|\Omega_k|} \frac{|\Omega_{k-2}^||}{|\Omega_{k-2}|},
   $$
   where the second factor stands for the fact that the top-right quadrant must contain a vertical bisector. 
   Iterating this in the bottom-right quadrant, we obtain
   \begin{align}\label{eqn:product}
      \frac{|\Upsilon_k|}{|\Omega_k|}
      \leq \frac{|\Omega_k^+|}{|\Omega_k|} \frac{|\Omega_{k-2}^||}{|\Omega_{k-2}|}\frac{|\Omega_{k-2}^+|}{|\Omega_{k-2}|} \frac{|\Omega_{k-4}^||}{|\Omega_{k-4}|}...
   \end{align}
   Proposition~\ref{thm:recurrence} gives that 
   $$
      \frac{|\Omega_k^||}{|\Omega_k|} 
      = \frac{|\Omega_k| + |\Omega_{k-2}|^4}{2|\Omega_k|}
      = \frac{1}{2}\left(1+\frac{|\Omega_{k}^+|}{|\Omega_k|}\right)
      \leq \frac{1}{2}\left(1+\frac{1}{2\phi+1
      }\right)
      = \frac{\phi^2
      }{2\phi+1
      },
   $$
   where the inequality follows from Lemma~\ref{lem:plus}.  
   For even $k$, because $|\Omega_0^|| = 0$ the last term we can obtain in  (\ref{eqn:product}) is $\frac{|\Omega_{2}^+|}{|\Omega_2|}$, so we can write
   \begin{align*}
    \frac{|\Upsilon_k|}{|\Omega_k|}
          \leq \left(\prod_{i=0}^{k/2-2}\frac{|\Omega_{k-2i}^+|}{|\Omega_{k-2i}|} \cdot\frac{|\Omega_{k-2i-2}^||}{|\Omega_{k-2i-2}|}  \right)\frac{|\Omega_{2}^+|}{|\Omega_2|}
          \leq \frac{1}{2\phi+1}\left( \frac{1}{2\phi+1} \cdot\frac{\phi^2}{2\phi+1}\right)^{\frac{k}{2}-1} 
          = \frac{\phi^{-2k+4}}{2\phi +1}
          \leq \phi^{-2k+2},
   \end{align*}
   where the last expressions come from, respectively, identities for $\phi$ and the easily-checked inequality $2\phi+1 >  \phi^2$.
   When $k$ is odd, the last term in (\ref{eqn:product}) is $\frac{|\Omega_1^||}{|\Omega_1|}$ because $|\Omega_1^+| = 0$, so we can write 
    \begin{align*}
    \frac{|\Upsilon_k|}{|\Omega_k|}
          \leq \left(\prod_{i=0}^{(k-3)/2}\frac{|\Omega_{k-2i}^+|}{|\Omega_{k-2i}|} \cdot\frac{|\Omega_{k-2i-2}^||}{|\Omega_{k-2i-2}|}  \right)
          \leq \left( \frac{1}{2\phi+1} \cdot\frac{\phi^2}{2\phi+1}\right)^{\frac{k-1}{2}} 
          \leq \phi^{-2k+2},
   \end{align*}
   where again the last expression is the result of applying identities for $\phi$ and simplifying. 
   %
\end{proof}

We are now ready to prove our second main theorem, giving a lower bound on the mixing and relaxation times of $\Mn$ of $\Omega(n^{2\log\phi})$. 

\begin{proof}[Proof of Theorem~\ref{thm:lowerbound}]
   We will derive a upper bound on the spectral gap $\gamma_k$. 
   To do this, we consider the test function $f:\Omega_k \to \{0,1\}$ such that 
   \begin{equation}
      f(x) \text{ is 1 if $x\in\Omega_k^|$, and 0 otherwise.}
      \label{eq:f}
   \end{equation}
   We will apply this function to the characterization of the spectral gap in Proposition~\ref{prop:spectralbound}.
   
   We start by showing that the variance of $f$ is bounded away from $0$ as $k\to\infty$.
   Recall that $\Var_k$ denotes variance with respect to the uniform measure on $\Omega_k$.
   \begin{claim}\label{claim:var}
      With $f:\Omega_k\to\{0,1\}$ as in~\eqref{eq:f}, we have that 
      $$
         \lim_{k\to\infty}\Var_k(f) = \sqrt{5}-2.
      $$
   \end{claim}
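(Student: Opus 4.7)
The plan is to exploit that $f$ takes only values in $\{0,1\}$, so $f^2=f$ and hence
$$
\Var_k(f) = \mathbb{E}[f] - \mathbb{E}[f]^2 = p_k(1-p_k),
$$
where $p_k := |\Omega_k^||/|\Omega_k|$ is the fraction of dyadic tilings of size $2^k$ that admit a vertical bisector. Thus the entire claim reduces to computing $\lim_{k\to\infty} p_k$ and substituting into $p(1-p)$.

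To identify $p_k$ in a form amenable to the asymptotics already established, I would use the standard decomposition from the proof of Proposition~\ref{thm:recurrence}: a tiling with a vertical bisector is determined by two independent tilings of size $2^{k-1}$, so $|\Omega_k^|| = |\Omega_{k-1}|^2$. Combining with the recurrence $|\Omega_k| = 2|\Omega_{k-1}|^2 - |\Omega_{k-2}|^4$ and the definition $f_k = A_{k-2}^2/A_{k-1}$, dividing numerator and denominator by $|\Omega_{k-1}|^2$ yields the clean identity
$$
p_k \;=\; \frac{|\Omega_{k-1}|^2}{|\Omega_k|} \;=\; \frac{1}{2-f_k^2}.
$$

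Next I would invoke Lemma~\ref{lem:fk_limit}, which gives $\lim_{k\to\infty} f_k = (\sqrt{5}-1)/2 = 1/\phi$. Using the identity $\phi^2 = \phi+1$ one computes $1/\phi^2 = (3-\sqrt{5})/2$, hence $\lim_k(2 - f_k^2) = (1+\sqrt{5})/2 = \phi$ and therefore $\lim_k p_k = 1/\phi = (\sqrt{5}-1)/2$. Substituting into $p(1-p)$ yields
$$
\lim_{k\to\infty}\Var_k(f) \;=\; \frac{\sqrt{5}-1}{2}\cdot\frac{3-\sqrt{5}}{2} \;=\; \frac{4\sqrt{5}-8}{4} \;=\; \sqrt{5}-2,
$$
as claimed.

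There is essentially no conceptual obstacle: the proof is a direct chain of substitutions using results already in the paper. The only steps requiring a modicum of care are (i) the observation that $f$ being $\{0,1\}$-valued collapses the variance to $p_k(1-p_k)$, and (ii) the closed-form simplification of $p(1-p)$ using standard identities for $\phi$.
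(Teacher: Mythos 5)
Your proof is correct and reaches the same number via essentially the same underlying computation: both you and the paper reduce $\Var_k(f)$ to $p_k(1-p_k)$ where $p_k=|\Omega_k^||/|\Omega_k|$, and then take a limit. The only routing difference is the intermediate expression: the paper writes $p_k=\tfrac{1}{2}\bigl(1+|\Omega_k^+|/|\Omega_k|\bigr)$, obtains $\Var_k(f)=\tfrac{1}{4}\bigl(1-(|\Omega_k^+|/|\Omega_k|)^2\bigr)$, and invokes Lemma~\ref{lem:plus}; you instead write $p_k=|\Omega_{k-1}|^2/|\Omega_k|=\tfrac{1}{2-f_k^2}$ and invoke Lemma~\ref{lem:fk_limit} directly. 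Since Lemma~\ref{lem:plus} is itself derived from Lemma~\ref{lem:fk_limit} (and indeed $|\Omega_k^+|/|\Omega_k|=f_k^2/(2-f_k^2)$ makes the two parametrizations algebraically identical), your route is marginally shorter and bypasses one lemma, but the argument is substantively the same. All the algebra checks out, and the observation that a $\{0,1\}$-valued $f$ has Bernoulli variance $p(1-p)$ is a clean way to package the paper's expansion of $\Var_k(f)$ as $|\Omega_k^||\,|\Omega_k\setminus\Omega_k^||/|\Omega_k|^2$.
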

   \begin{proof}[Proof of claim]
      We start by writing
      \begin{equation}
         \Var_k(f) = \sum_{x\in \Omega_k^|}\sum_{y\in \Omega_k\setminus \Omega_k^|} \frac{1}{|\Omega_k|^2}
         = \frac{|\Omega_k^||\cdot |\Omega_k\setminus \Omega_k^||}{|\Omega_k|^2}.
         \label{eq:writevar}
      \end{equation}
      Since $|\Omega_k^||= |\Omega_{k-1}|^2$, using Proposition~\ref{thm:recurrence} we obtain
      \begin{equation}
         |\Omega_k^||=\frac{|\Omega_k|+|\Omega_{k-2}|^4}{2}=\frac{|\Omega_k|+|\Omega_k^+|}{2},
         \label{eq:step1}
      \end{equation}
      and
      \begin{equation}
         |\Omega_k\setminus \Omega_k^||
         =|\Omega_k|-|\Omega_k^||
         =\frac{|\Omega_k|-|\Omega_k^+|}{2}.
         \label{eq:step2}
      \end{equation}
      Plugging~\eqref{eq:step1} and~\eqref{eq:step2} into~\eqref{eq:writevar}, we get
      $$
         \Var_k(f)
         = \frac{1}{4}\left(1+\frac{|\Omega_k^+|}{|\Omega_k|}\right)\left(1-\frac{|\Omega_k^+|}{|\Omega_k|}\right)
         = \frac{1}{4}\left(1-\left(\frac{|\Omega_k^+|}{|\Omega_k|}\right)^2\right).
      $$
      Then Lemma~\ref{lem:plus} yields
      $$
         \lim_{k\to\infty}\Var_k(f)= \frac{1}{4}\left(1-\frac{1}{(2\phi+1)^2}\right).
      $$   
      Plugging in the value of $\phi$ completes the proof of the claim.
   \end{proof}
   
   Now it remains to obtain an upper bound for $\mcE(f)$.
   Let $\partial \Omega_k^|$ be the set of tilings in $\Omega_k\setminus \Omega_k^|$ which can be obtained from a tiling in $\Omega_k^|$ via one edge flip.
   Recall for two tilings $x,y\in\Omega_k$, we write $x\sim_e y$ if $x$ can be obtained from $y$ by one edge flip.
   Hence,
   $$
      \mcE(f)
      = \sum_{x \in \partial \Omega_k^|}\sum_{y\in\Omega_k^| \colon y \sim_e x} \frac{1}{|\Omega_k|} \frac{1}{2n}.
   $$
   Note that each tiling in $\partial \Omega_k^|$ has a horizontal bisector and is not in $\Omega_k^+$. 
   This means that it has exactly one edge flip that can bring it into $\Omega_k^|$, which is the flip that creates a 
   vertical bisector.
   Then, we have 
   $$
      \mcE(f)
      = \frac{|\partial \Omega_k^||}{2n \cdot |\Omega_k|}.
   $$
   Now we need to describe the set $\partial \Omega_k^|$. 
   It is a set of tilings with no vertical bisector, but with one edge flip that creates a vertical bisector; see Figure~\ref{fig:bd}. 
   \begin{figure}[htbp]
      \begin{center}
         \includegraphics{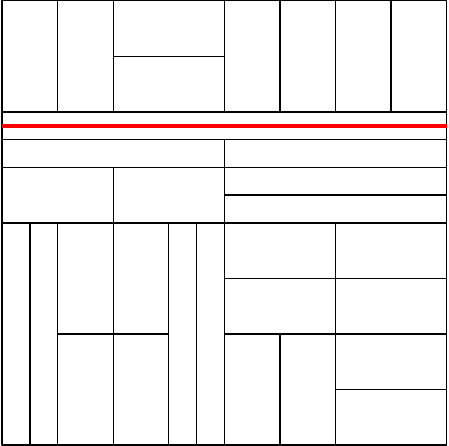}
      \end{center}\vspace{-.5cm}
      \caption{A tiling in $\partial \Omega_k^|$, with the red edge being the flip that brings the tiling into $\Omega_k^|$.}
      \label{fig:bd}
   \end{figure}
   Note that the edge whose flip creates a vertical bisector must be a horizontal edge of length $1$ 
   which flips to a vertical edge of length $2/n$. From now on we will refer to this edge as the 
   \emph{pivotal edge}.
   
   In order to estimate the cardinality of 
   $\partial \Omega_k^|$, we will describe a procedure to construct a tiling $x\in\partial \Omega_k^|$, observing the position of the pivotal edge.
   Note that $x$ must have a horizontal bisector, which splits $[0,1]^2$ into its top and bottom halves. 
   Assume that the pivotal edge is in the top half of $x$. This implies that the bottom half of $x$ must itself contain a vertical bisector
   since the pivotal edge must be the only edge that forbids a vertical bisector to exist, see Figure~\ref{fig:sum}(a).
   The two quadrants in the bottom half are simply any tilings of $\Omega_{k-2}$.
   Note also that the top half of $x$ must contain a horizontal bisector, otherwise $x \not\in \partial \Omega_k^|$, see Figure~\ref{fig:sum}(b).
   Then we iterate the above construction: among the two halves of the top half, one 
   must contain the pivotal edge, say the bottom one, while the other contains a vertical bisector, each side of which being completed with a tiling 
   from $\Omega_{k-3}$, which gives the configuration in Figure~\ref{fig:sum}(c). 
   Continuing this for $k-2$ steps concludes the construction.
   \begin{figure}
      \begin{center}
         \includegraphics[scale=.8]{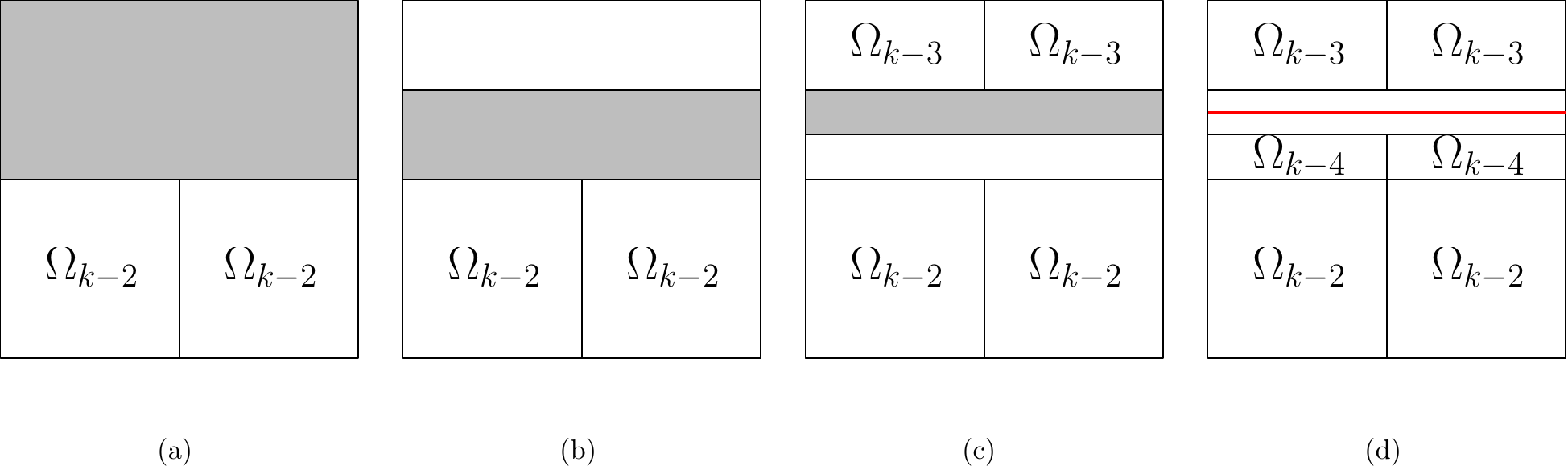}
      \end{center}\vspace{-.5cm}
      \caption{The construction of a tiling in $\partial \Omega_k^|$. The grey areas represent the part that contains the pivotal edge.}
      \label{fig:sum}
   \end{figure}
   
   To estimate the cardinality of $\partial \Omega_k^|$, 
   note that in each step of the construction 
   we have two choices for where the pivotal edge is: either in the top half or the bottom half of the corresponding region. 
   Therefore, the number of tilings in $\partial \Omega_k^|$ is
   $$
     |\partial\Omega_k^|| = \prod_{i=2}^k \left(2|\Omega_{k-i}|^2\right)
     = 2^{k-1} \prod_{i=0}^{k-2} |\Omega_{i}|^2
     = \frac{n}{2} \prod_{i=0}^{k-2} |\Omega_{i}|^2.
   $$
   Hence,
   $$
     \mcE(f)
     = \frac{1}{4 |\Omega_k|} \prod_{i=0}^{k-2} |\Omega_{i}|^2
     \leq \frac{1}{4} \phi^{-2k+2} 
   $$
   where the last step follows from Lemma~\ref{lem:upsilon}.
   Therefore, there exists a constant $c>0$ such that
   $$
      \gamma_k \leq c \phi^{-2k}. 
   $$
   This implies that the relaxation time and mixing time satisfy
   $$    
\trel, \tmix \geq \frac{1}{c} \phi^{2k}  = \frac{1}{c} \phi^{2\log n} = \frac{1}{c} n^{2\log \phi} = \Omega(n^{2\log\phi}).$$ 
This complete the proof of the theorem.
\end{proof}

\section{The spectral gap of the block dynamics}
\label{sec:block-constant}
We now present the proof of Theorem~\ref{thm:block17}, which states that there exists a positive integer $k_0$ such that for all $k \geq k_0$, the spectral gap $\gamma_{k,block}$ is at least $1/17$. 

\begin{proof}[Proof of Theorem~\ref{thm:block17}]
   We start defining the distance between two dyadic tilings $x,y\in\Omega_k$. 
   In order to do this, we recall the notion of \emph{half-bisectors}.
   We say that a tiling $x$ has a \emph{left half-bisector} if the line segment from $(0,1/2)$ to $(1/2,1/2)$ does not intersect the interior of any dyadic rectangle.
   In an analogous way we can define a \emph{right half-bisector} using the line segment from $(1/2,1/2)$ to $(1,1/2)$, 
   a \emph{top half-bisector} using the line segment from $(1/2,1)$ to $(1/2,1/2)$, 
   and a \emph{bottom half-bisector} using the line segment from $(1/2,1/2)$ to $(1/2,0)$.
   Note that if $x$ has a horizontal bisector, then it has both a left half-bisector and a right half-bisector. 
   However, $x$ may have a left half-bisector but no horizontal bisector.
   For example, the dyadic tiling in Figure~\ref{fig:dyadic_ex}(a) has top, right and bottom half-bisectors, but no left half-bisector.
   
   Now we define the distance between $x$ and $y$ as follows. 
   For each of the four possible half-bisectors, let $\ell_1$ be the number of such half-bisectors that are present in either $x$ or $y$, but not in both of them.
   Also, for each of the four possible quadrants (top-left, top-right, bottom-left and bottom-right) of $x$ and $y$, 
   let $\ell_2$ denote the number of such quadrants for which the rectangles in $x$ intersecting that quadrant are not the same as the 
   rectangles in $y$ intersecting that quadrant. Then, introducing a parameter $b>0$ that we will take to be sufficiently large later, we define the distance between $x$ and $y$ as 
   $$
      d(x,y) = b \ell_1 + \ell_2.
   $$
   For instance, consider the two dyadic tilings in Figure~\ref{fig:dyadic_ex}(a,b). In this case we have $\ell_1=1$ due to the left half-bisector that is present in (b) but not in (a), and 
   $\ell_2=3$ for top-left, top-right and bottom-left quadrants. The distance between these two tilings is then $b+3$.
   
   Our goal is to couple two instances of the block dynamics $\Mnb$, one starting from a state $x\in\Omega_k$ and the other from a state $y\in\Omega_k$, 
   such that the distance between $x$ and $y$ contracts after one step of the chains. 
   More precisely, letting $\mathbb{E}_{x,y}$ denote the expectation with respected to the coupling, and if $x'$ and $y'$ are the dyadic tilings obtained after one step of each chain, respectively, 
   we want to obtain a coupling and a value 
   $\Delta>0$ such that
   \begin{equation}
      \mathbb{E}_{x,y}[d(x',y')] \leq (1-\Delta) d(x,y) \quad \text{for all $x,y\in\Omega_k$}.
      \label{eq:chen}
   \end{equation}
   Once we have the above inequality, then a result of Chen
   \cite{Chen} (see also \cite[Theorem~13.1]{lpw}), 
   implies that $\gamma_{k,block}\geq \Delta$.
   
   We will use the following simple coupling between $x'$ and $y'$: 
   \begin{itemize}
      \item Uniformly at random choose a tiling $\rho\in\Omega_{k-1}$.
      \item Uniformly at random choose $Left$, $Right$, $Top$ or $Bottom$.
      \item Retile the choosen half (left, right, top or bottom) of $x$ with $\rho$, if possible.
      \item Retile the choosen half (left, right, top or bottom) of $y$ with $\rho$, if possible.
   \end{itemize}
   For a more detailed description of the retiling step, see the definition of the transition rule of $\Mnb$ in Section~\ref{sec:block}.
   When we update the left (resp., right) half of $x$ and $\rho$ contains a horizontal bisector, note that $x'$ will contain a left (resp., right) half-bisector. 
   Similarly, if we update the top (resp., bottom) half of $x$ and $\rho$ contains a vertical bisector, 
   then $x'$ will contain a top (resp., bottom) half-bisector. 
   In any of these cases, we say that the retiling yields a half-bisector of $x$. 

   The remaining of the proof is devoted to showing that 
   we can set $b$ large enough so that~\eqref{eq:chen} holds with $\Delta=\frac{1}{17}$. 
   In order to see this, we will split into three cases, and show that~\eqref{eq:chen} holds with $\Delta=\frac{1}{17}$
   for each case.

\bigskip\noindent
   \emph{Case 1: $x$ and $y$ have no common bisector}.\\
   The maximum number of common half-bisectors of $x$ and $y$ in this case is two. 
   Figure~\ref{fig:block0} illustrates the three possible configurations for the number of common half-bisectors of $x$ and $y$.
   \begin{figure}
      \begin{center}
         \includegraphics[scale=.9]{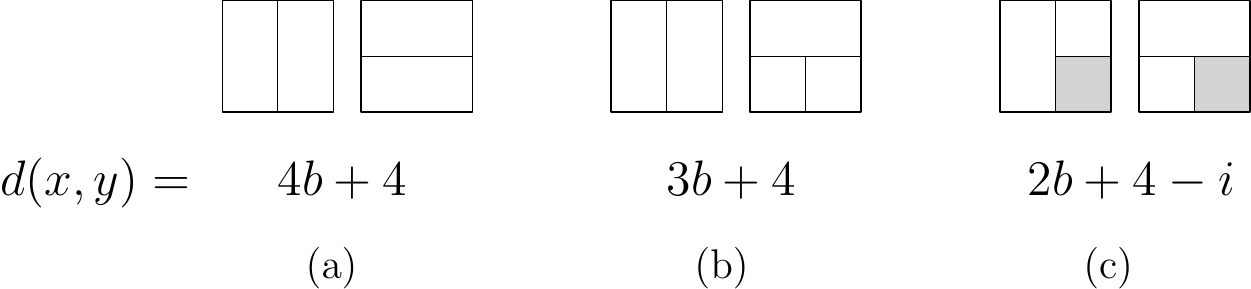}
      \end{center}\vspace{-.5cm}
      \caption{Possible configurations for the half-bisectors of $x$ and $y$ in case 1. 
         In figure (c), $i\in\{0,1\}$ denotes how many grey quadrants are tiled identically in $x$ and $y$.}
      \label{fig:block0}
   \end{figure}
   Consider first that $x$ and $y$ have no common half-bisector, which is illustrated in Figure~\ref{fig:block0}(a) and has $d(x,y)=4b+4$.
   Then, whichever half (left, right, top or bottom) is chosen to be retiled, note that either $x$ or $y$ is actually retiled, but never both. 
   With probability $\frac{|\Omega_{k-2}^2|}{|\Omega_{k-1}|}=f_k$ the retiling yields a half-bisector, which increases the number of 
   common half-bisectors between $x$ and $y$, and thus decreases their distance by $b$. 
   Hence, using that $f_k\geq 1/2$, we have
   $$
      \mathbb{E}_{x,y}[d(x',y')] = d(x,y) - f_k b \leq 4b+4 - \frac{b}{2} < \left(1-\frac{1}{17}\right)(4b+4),
   $$
   where the last step is true by setting $b$ large enough (in this case, $b \geq 1$ suffices).
   
   Now consider that $x$ and $y$ have one common half-bisector, and use Figure~\ref{fig:block0}(b) as a reference, with $x$ being the left tiling and $y$ being the right tiling. We have $d(x,y)=3b+4$.
   If we retile the left or right halves, so only $x$ gets retiled, and the retiling yields a half-bisector, then the number of common half-bisectors of $x$ and $y$ decreases by 1. A similar behavior happens 
   if we retile the top half. However, if we retile the bottom half, and the retiling does not yield a half-bisector, then the number of common half-bisectors decreases by $1$. Hence,
   using that $f_k\geq 1/2$, we obtain
   $$
      \mathbb{E}_{x,y}[d(x',y')] 
      \leq d(x,y) - \frac{3f_k b}{4} + \frac{(1-f_k) b}{4} 
      \leq 3b+4-\frac{b}{4}
      < \left(1-\frac{1}{17}\right)(3b+4),
   $$
   where the last step is true by setting $b$ large enough (in this case, $b \geq 4$ suffices).
   
   Finally, suppose $x$ and $y$ have two common half-bisectors, as illustrated in Figure~\ref{fig:block0}(c), where they may or may not be tiled the same in the quadrant bounded by these common half-bisectors. In this case $d(x,y) = 2b+4-i$, where $i = 1$ if they agree on this quadrant and $i=0$ otherwise. Retiling the left and top halves can yield a new common half-bisector, while 
   retiling the right and bottom halves may remove a common half-bisector. Moreover, if $i=1$ and we retile the right or bottom halves,  the tilings of the bottom-right quadrant of $x$ and of $y$ may become different,
   increasing the distance between $x$ and $y$ by $1$. Putting these together, we have
   \begin{align*}
      \mathbb{E}_{x,y}[d(x',y')] 
      &\leq d(x,y) - \frac{2f_k b}{4} + \frac{2(1-f_k) b}{4} + i\frac{2}{4}
      \\&\leq 2b+4-\frac{i}{2}-\frac{(2f_k-1)b}{2}
      = \frac{(5-2f_k)b}{2}+4-\frac{i}{2}.
   \end{align*}
   Since $f_k \to \frac{\sqrt{5}-1}{2}$ as $k\to\infty$, the right-hand side above goes to $\left(\frac{6-\sqrt{5}}{2}\right)b+4-\frac{i}{2}$. In particular, for $k \geq 10$, the coefficient of $b$  above satisfies $\frac{5-2f_k}{2} < 2\left(1-\frac{1}{17}\right)$, and so we can set $b$ large enough so that 
   $\mathbb{E}_{x,y}[d(x',y')]\leq \left(1-\frac{1}{17}\right)(2b+4-i)$. We note this is the tight case, as $\frac{6-\sqrt{5}}{2} > 2 \left(1-\frac{1}{16}\right)$, so this particular coupling and distance metric cannot be used to show the spectral gap is at least $1/16$. 
   This concludes the first case.

   \bigskip\noindent
   \emph{Case 2: $x$ and $y$ have a common bisector, but neither $x$ nor $y$ has both bisectors}.\\
  Without loss of generality we assume $x$ and $y$ both have a vertical bisector and neither has a horizontal bisector.  Each of $x$ and $y$ has at least $2$ and at most $3$ half-bisectors.
   Figure~\ref{fig:block1} illustrates the four possible configurations for the number of half-bisectors of $x$ and $y$; the shaded quadrants are those where $x$ and $y$ could have the same tiling. 
   \begin{figure}
      \begin{center}
         \includegraphics[scale=.9]{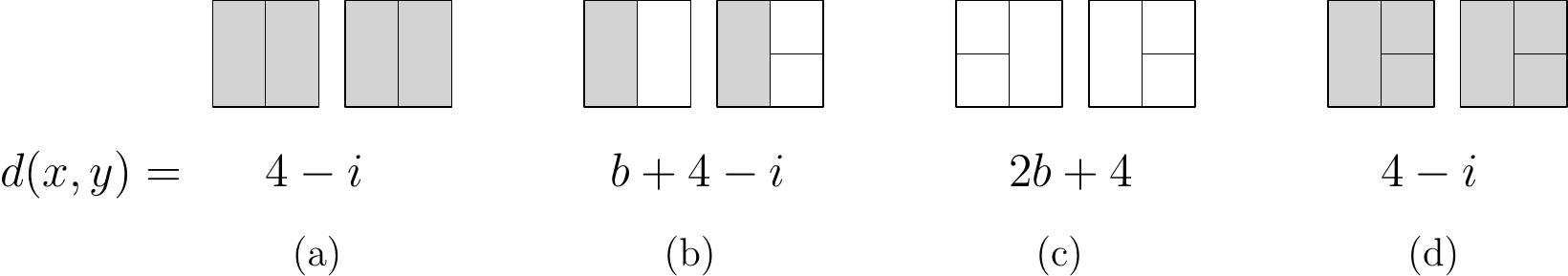}
      \end{center}\vspace{-.5cm}
      \caption{Possible configurations for the half-bisectors of $x$ and $y$ in case 2. 
         The value of $i\in\{0,1,2,3\}$ denotes the number of grey quadrants which is tiled identically in $x$ and $y$.}
      \label{fig:block1}
   \end{figure}
   In all the situations of Figure~\ref{fig:block1}, if we retile the left or right halves, then we match up the configuration of $x$ and $y$ in that half. In particular, if $x$ and $y$ don't agree on the presence of left half-bisector, then they also do not have the same tiling of the top left or bottom left quadrants, so the decrease in distance due to a retiling of the left half, a move that occurs with probability $1/4$, is $(b+2)$. If $x$ and $y$ agree on the presence of a left half-bisector and have the same tiling on $i' \in \{0,1,2\}$ of the two left quadrants, then the decrease in distance due to a retiling of the left half is $(2-i')$. The same holds for right half-bisectors and retilings of the right half. 
   As there are no moves of the coupling that can increase the distance between $x$ and $y$, it can be shown that in all of the cases shown in Figure~\ref{fig:block1} the distance decreases by 1/4 in expectation. Hence, 
   $$
      \mathbb{E}_{x,y}[d(x',y')] 
      \leq d(x,y) - \frac{d(x,y)}{4}
      \leq \left(1-\frac{1}{17}\right)d(x,y),
   $$
   which concludes the second case.

   \bigskip\noindent
   \emph{Case 3: $y$ has both vertical and horizontal bisectors}.\\
   Here there are three situations, depending on whether $x$ has two, three or four half-bisectors; see Figure~\ref{fig:block2}.
   \begin{figure}
      \begin{center}
         \includegraphics[scale=.9]{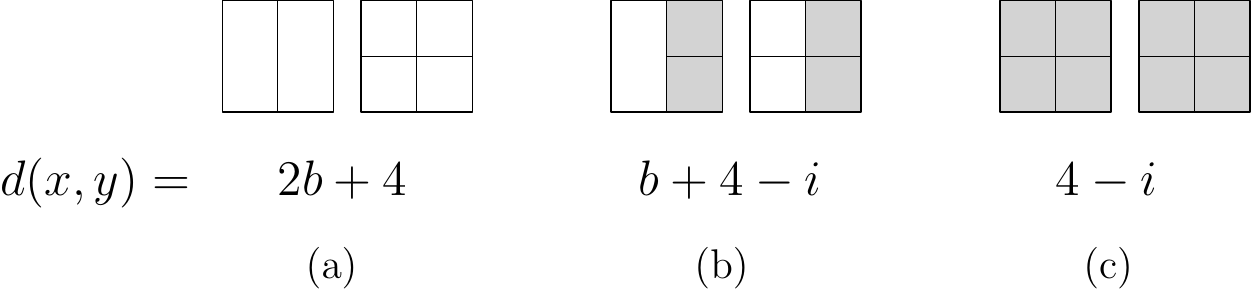}
      \end{center}\vspace{-.5cm}
      \caption{Possible configurations for the half-bisectors of $x$ and $y$ in case 3. 
         The value of $i\in\{0,1,2,3\}$ denotes the number of grey quadrants which is tiled identically in $x$ and $y$.}
      \label{fig:block2}
   \end{figure}
   In the situation of Figure~\ref{fig:block2}(a), if the left or right halves are retiled, then we match up $x$ and $y$ in that half, decreasing the distance by $b+2$. But if we retile the top or bottom halves, then we 
   may increase the distance by $b$ if the retiling does not yield a half-bisector. Hence,
   $$
      \mathbb{E}_{x,y}[d(x',y')] 
      \leq d(x,y) - \frac{2(b+2)}{4} + \frac{2(1-f_k) b}{4}
      =\frac{(4-f_k)b}{2} +3.
   $$
   Since $\frac{4-f_k}{2}\to \frac{9-\sqrt{5}}{4}< \left(1-\frac{1}{17}\right)2$, the right-hand side above is smaller than $\left(1-\frac{1}{17}\right)(2b+4)$ when $k$ and $b$ are large enough.
   A similar situation occurs in Figure~\ref{fig:block2}(b), but the distance increases a bit more when the top or bottom half is retiled as quadrants that were equal in $x$ and $y$ may become different. In this case,
   we have
   $$
      \mathbb{E}_{x,y}[d(x',y')] 
      \leq d(x,y) - \frac{(b+4-i)}{4} + \frac{2(1-f_k) b}{4} + \frac{2}{4}
      =\frac{(5-2f_k)b}{4} +\frac{6-i}{4}.
   $$
   Since $\frac{5-2f_k}{4}\to \frac{6-\sqrt{5}}{4}<\left(1-\frac{1}{17}\right)$, the right-hand side above is smaller than $\left(1-\frac{1}{17}\right)(b+4-i)$ when $k$ and $b$ are large enough; this is the second tight case, where we see contraction by a factor of $1-\frac{1}{17}$ but not by $1-\frac{1}{16}$.
   Finally, for the situation in Figure~\ref{fig:block2}(c), regardless of which half we choose to retile, the distance will not increase; if we choose a half containing a quadrant on which $x$ and $y$ differ, the distance will decrease. Each quadrant on which $x$ and $y$ differ is contained in two halves and thus is retiled so that $x$ and $y$ agree there with probability $1/2$. 
   That is,
   $$
      \mathbb{E}_{x,y}[d(x',y')] 
      \leq d(x,y) - \frac{ d(x,y)}{2}
      \leq \left(1-\frac{1}{17}\right)d(x,y).
   $$
   This concludes the third case. We have shown that for all possible tilings $x$ and $y$, it holds that ${\mathbb{E}_{x,y}[d(x',y')] \leq \left(1-\frac{1}{17}\right)d(x,y)}$. This implies $\gamma_{k,block} \geq \frac{1}{17}$ for all $k$ sufficiently large, as desired. 
\end{proof}





\section*{Acknowledgments}
This work started during the 2016 AIM workshop \emph{Markov chain mixing times}. 
We thank the organizers for the invitation and the stimulating atmosphere. 

\bibliographystyle{plain}

\bibliography{biblio}

\begin{thebibliography}{10}

\bibitem{bcgy12}
Michael Burr, Sung~Woo Choi, Ben Galehouse, and Chee~K. Yap.
\newblock Complete subdivision algorithms, {II}: Isotopic meshing of singular
  algebraic curves.
\newblock {\em Journal of Symbolic Computation}, 47(2):131 -- 152, 2012.

\bibitem{cmr15}
Sarah Cannon, Sarah Miracle, and Dana Randall.
\newblock Phase transitions in random dyadic tilings and rectangular
  dissections.
\newblock In {\em Proceedings of the 26th Symposium on Discrete Algorithms
  ({SODA})}, 2015.

\bibitem{cmss15}
Pietro Caputo, Fabio Martinelli, Alistair Sinclair, and Alexandre Stauffer.
\newblock Random lattice triangulations: Structure and algorithms.
\newblock {\em The Annals of Applied Probability}, 25(4):1650--1685, 2015.

\bibitem{cmss16}
Pietro Caputo, Fabio Martinelli, Alistair Sinclair, and Alexandre Stauffer.
\newblock Dynamics of lattice triangulations on thin rectangles.
\newblock {\em Electronic Journal of Probability}, 21(29), 2016.

\bibitem{Cesi2001}
Filippo Cesi.
\newblock Quasi-factorization of the entropy and logarithmic {S}obolev
  inequalities for {G}ibbs random fields.
\newblock {\em Probability Theory and Related Fields}, 120(4):569--584, 2001.

\bibitem{Chen}
Mu-Fa Chen.
\newblock Trilogy of couplings and general formulas for lower bound of spectral
  gap.
\newblock In {\em Probability towards 2000 ({N}ew {Y}ork, 1995)}, volume 128 of
  {\em Lecture Notes in Statist.}, pages 123--136. Springer, New York, 1998.

\bibitem{cdg07}
Colin Cooper, Martin Dyer, and Catherine Greenhill.
\newblock Sampling regular graphs and a peer-to-peer network.
\newblock {\em Combinatorics, Probability and Computing}, 16(4):557--593, July
  2007.

\bibitem{dsc}
Persi Diaconis and Laurent Saloff-Coste.
\newblock Comparison theorems for reversible {Markov} chains.
\newblock {\em The Annals of Applied Probability}, 3:696--730, 1993.

\bibitem{greenhill15}
Catherine Greenhill.
\newblock The switch {M}arkov chain for sampling irregular graphs.
\newblock In {\em Proceedings of the 26th Symposium on Discrete Algorithms
  ({SODA})}, 2015.

\bibitem{jrs02}
Svante Janson, Dana Randall, and Joel Spencer.
\newblock Random dyadic tilings of the unit square.
\newblock {\em Random Structures and Algorithms}, 21:225--251, 2002.

\bibitem{lsv02}
Jeffery~C. Lagarias, Joel~H. Spencer, and Jade~P. Vinson.
\newblock Counting dyadic equipartitions of the unit square.
\newblock {\em Discrete Mathematics}, 257(2-3):481--499, November 2002.

\bibitem{lpw}
David~A. Levin, Yuval Peres, and Elizabeth~L. Wilmer.
\newblock {\em Markov Chains and Mixing Times}.
\newblock American Mathematical Society, Providence, RI, 2009.

\bibitem{lrs}
Michael Luby, Dana Randall, and Alistair Sinclair.
\newblock Markov chain algorithms for planar lattice structures.
\newblock {\em SIAM Journal on Computing}, 31:167--192, 2001.

\bibitem{Martinelli1999}
Fabio Martinelli.
\newblock Lectures on {G}lauber dynamics for discrete spin models.
\newblock In Pierre Bernard, editor, {\em Lectures on Probability Theory and
  Statistics: Ecole d'Et{\'e} de Probailit{\'e}s de Saint-Flour XXVII - 1997},
  pages 93--191. Springer Berlin Heidelberg, Berlin, Heidelberg, 1999.

\bibitem{mt}
Leslie McShine and Prasad Tetali.
\newblock On the mixing time of the triangulation walk and other {C}atalan
  structures.
\newblock {\em DIMACS-AMS Volume on Randomization Methods in Algorithm Design},
  43:147--160, 1998.

\bibitem{met}
N.~Metropolis, A.W. Rosenbluth, M.N. Rosenbluth, A.H. Teller, and E.~Teller.
\newblock Equation of state calculations by fast computing machines.
\newblock {\em Journal of Chemical Physics}, 21:1087--1092, 1953.

\bibitem{rt}
Dana Randall and Prasad Tetali.
\newblock Analyzing {G}lauber dynamics by comparison of {M}arkov chains.
\newblock {\em Journal of Mathematical Physics}, 41:1598--1615, 2000.

\bibitem{ScottNowak06}
Clayton Scott and Robert~D. Nowak.
\newblock Minimax-optimal classification with dyadic decision trees.
\newblock {\em IEEE Transactions on Information Theory}, 52(4), 2006.

\bibitem{stauffer16}
Alexandre Stauffer.
\newblock A {L}yapunov function for {G}lauber dynamics on lattice
  triangulations.
\newblock {\em Probability Theory and Related Fields}, to appear.

\end{thebibliography}

\end{document}